\newtheorem{theorem}{Theorem}
\newtheorem{prop}[theorem]{Proposition}
\newtheorem{lemma}[theorem]{Lemma}
\newtheorem{conj}[theorem]{Conjecture}
\theoremstyle{definition}
\newtheorem{definition}[theorem]{Definition}
\theoremstyle{remark}
\newtheoremstyle{named}{}{}{\itshape}{}{\bfseries}{.}{.5em}{\thmnote{#3}}
\theoremstyle{named}
\newtheorem*{namedtheorem}{}
\numberwithin{theorem}{section}
\numberwithin{equation}{section}
\newcommand{\R}{\mathbb{R}}
\newcommand{\mb}{\mathbb}
\renewcommand{\phi}{\varphi}
\newcommand{\paren}[1]{\left(#1\right)}
\newcommand{\set}[1]{\left\{#1\right\}}
\newcommand{\abs}[1]{\left\lvert#1\right\rvert}
\newcommand{\Vtil}{\widetilde{V}}
\begin{document}

\title{1D Triple Bubble Problem with Log-Convex Density}

\author{Nat Sothanaphan}
\email{natsothanaphan@gmail.com}

\begin{abstract}
We prove that for a symmetric, strictly log-convex density on the real line, there are four possible types of perimeter-minimizing triple bubbles. This extends the work of Bongiovanni et al. \cite{Bo}, which shows that there are
two possible types of perimeter-minimizing double bubbles.
\end{abstract}

\maketitle

{\small \textbf{Keywords:} triple bubble; density; isoperimetric}

\section{Introduction}

The log-convex density theorem proved by Chambers \cite{Ch} states that on $\R^N$ with smooth, radially symmetric, log-convex density, a sphere centered at the origin is perimeter minimizing for a given volume.
We seek the corresponding least-perimeter way to enclose and separate many given volumes, the perimeter-minimizing $n$-bubble.
In our previous work \cite{Bo}, we showed that for double bubbles on $\R$ with symmetric, strictly log-convex density, there are two possible types of perimeter minimizers shown in Figure \ref{fig:doubletriple}.

\begin{figure}[h]
\includegraphics[width=180pt]{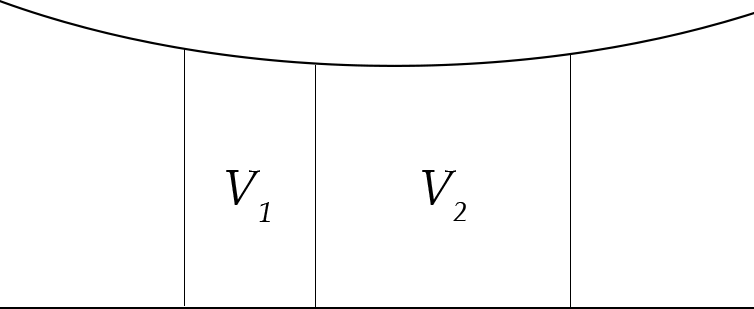}
\hspace{20pt}
\includegraphics[width=180pt]{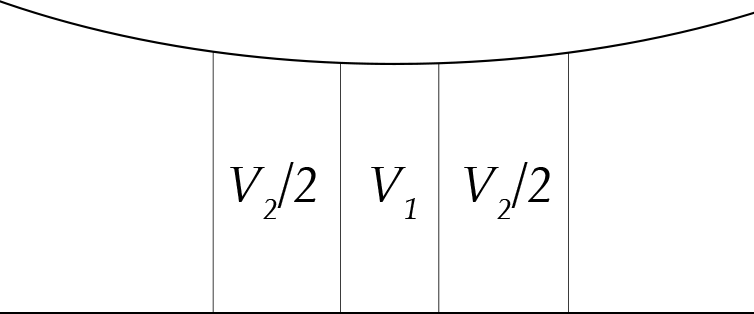}
\caption{The two types of perimeter-minimizing double bubbles on $\R$ with symmetric, strictly log-convex density for given volumes $V_1 \leq V_2$.}
\label{fig:doubletriple}
\end{figure}

This paper extends the techniques of our previous work \cite{Bo} to characterize triple bubbles
on $\R$ with symmetric, strictly log-convex density.
Our main result is stated in Theorem \ref{thm:main}: there are four possible types of perimeter-minimizing triple bubbles illustrated in Figure \ref{fig:4types}.
We also prove that minimizers of each of the four types exist using numerical computation.

\begin{figure}[h]
\includegraphics[width=350pt]{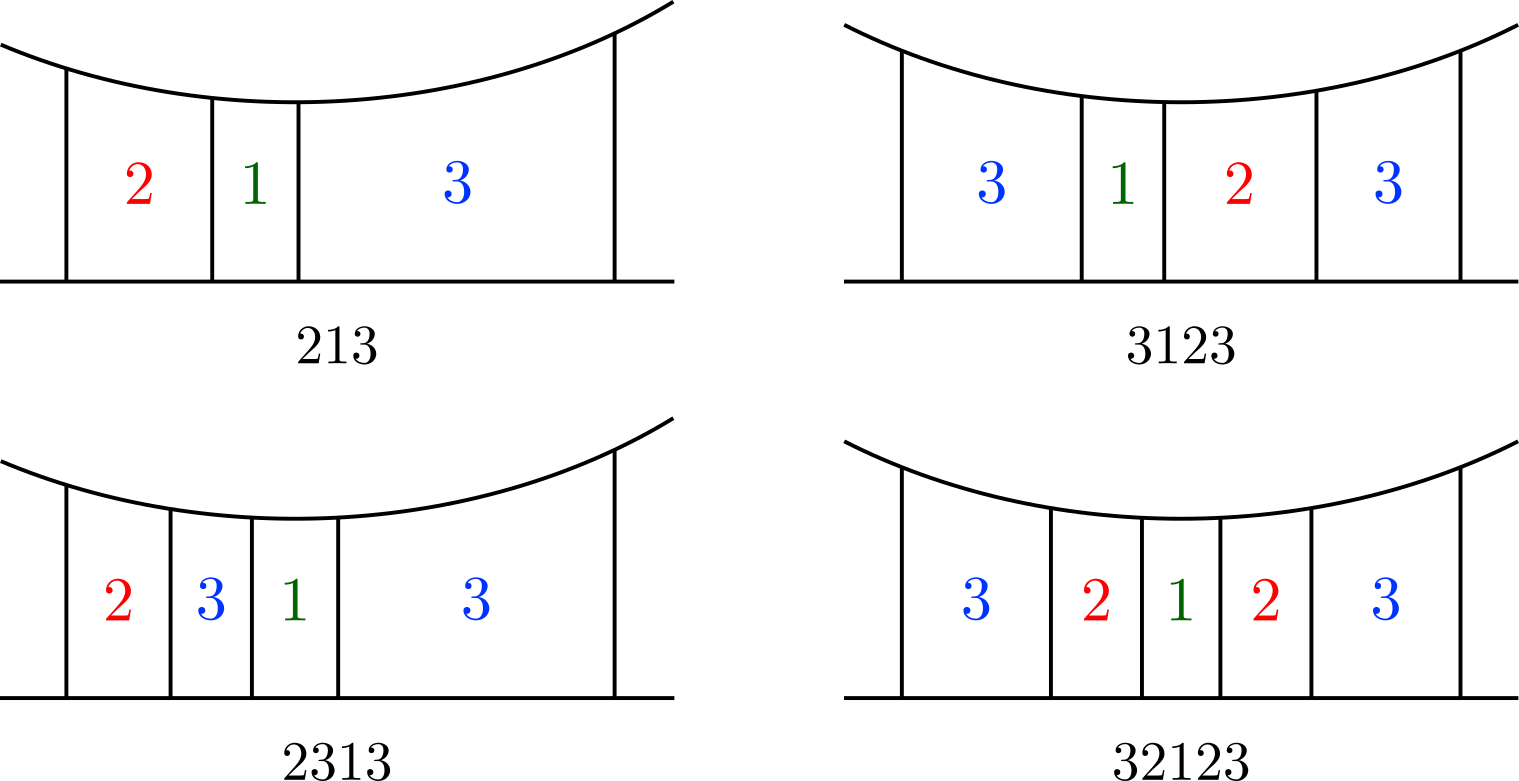}
\caption{The four types---213, 3123, 2313, and 32123---of perimeter-minimizing triple bubbles with prescribed volumes
$V_1 \leq V_2 \leq V_3$. The label $n$ means that
the corresponding component is part of the bubble with volume $V_n$.}
\label{fig:4types}
\end{figure}

\begin{theorem}
\label{thm:main}
On $\R$ with symmetric, log-convex density that is uniquely minimized at the origin and prescribed volumes $V_1 \leq V_2 \leq V_3$, a perimeter-minimizing triple bubble is, up to reflection, of one of the four types: $213$, $3123$, $2313$, and $32123$, as shown in Figure \ref{fig:4types}. Moreover, there exists a symmetric, strictly log-convex, $C^1$ density such that each of the four types is perimeter minimizing for some prescribed volumes.
\end{theorem}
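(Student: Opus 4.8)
The plan is to follow and combinatorially extend the variational scheme of \cite{Bo}. Write $f=e^{h}$ with $h$ even, convex, and $h(x)>h(0)$ for $x\neq 0$; convexity then forces $h$ strictly increasing on $(0,\infty)$ with $h(x)\to\infty$, so $f$ is strictly increasing on $(0,\infty)$ and unbounded. First I record the structural preliminaries: a perimeter-minimizing triple bubble exists by the direct method, and any minimizer is, modulo null sets, a finite union of intervals, with $R_{1},R_{2},R_{3}$ together with the unbounded exterior components partitioning $\R$. The key reduction is an exchange argument placing the three regions inside a single bounded interval $[a,b]$ that contains the origin: if the whole configuration lay in $\{x>0\}$ it could be translated toward $0$, strictly lowering the perimeter; if $0$ lay in a bounded exterior chamber, the sub-configuration on its positive side could be slid toward $0$; and any bounded exterior chamber not containing $0$ could be deleted by sliding the sub-configuration on its far-from-origin side inward until that chamber closes, which removes one wall while moving all remaining walls toward the origin without crossing it, hence without increasing any $f$-value. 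Consequently a minimizer is encoded by a word $w=(r_{1},\dots,r_{m})$ over $\{1,2,3\}$ with $r_{i}\neq r_{i+1}$ and every symbol present, together with chamber lengths summing to $V:=V_{1}+V_{2}+V_{3}$; its perimeter equals $\sum_{j=0}^{m}f(p_{j})$, where $p_{0}<\cdots<p_{m}$ are the walls, $p_{m}-p_{0}=V$, and the successive gaps are the chamber lengths in the order $w$ prescribes.

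For a fixed word the perimeter is a convex function of the admissible wall positions (log-convex $\Rightarrow$ convex), and the engine of the argument — the direct descendant of the double-bubble analysis of \cite{Bo} — is a lemma that at the optimum the walls cluster about the origin as tightly as the word allows, so that comparing two words reduces to comparing the wall count $m+1$ against how balanced about $0$ the gap sequence can be made by translation. With this lemma I would: eliminate every word in which the smallest region is split, or appears at an extreme end of the word, or in which some region occurs three or more times — the first and third cases by comparison with a word having strictly fewer walls, the second by an equal-wall-count comparison; show that region $2$ is split only in tandem with region $3$, a two-piece region paying its way only by rebalancing the outermost pair of walls; and, among the equal-wall-count words that survive, isolate the one placing the smaller chambers more centrally. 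What remains is exactly $(2,1,3)$, $(3,1,2,3)$, $(2,3,1,3)$, $(3,2,1,2,3)$ and their reflections, so no configuration outside the four types $213$, $3123$, $2313$, $32123$ can be perimeter minimizing. I expect this combinatorial elimination to be the main obstacle: turning the clustering heuristic into the exact convexity inequalities, uniformly in the density and in $V_{1}\le V_{2}\le V_{3}$, requires controlling the length-versus-balance trade-off precisely, and the case count here exceeds that of \cite{Bo}.

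For the second assertion, the classification already forces every minimizer to be one of the four types, so to realize type $X$ it suffices to exhibit a symmetric, strictly log-convex, $C^{1}$ density and volumes $V_{1}\le V_{2}\le V_{3}$ for which the optimal perimeter within type $X$ lies strictly below the optimal perimeter of each of the other three. Within a type that optimum is a smooth function of finitely many parameters — one translation for $213$; a translation and a split ratio for $3123$ and $2313$; and for $32123$ no free parameter, since evenness of $f$ forces the symmetric configuration with walls at $\pm V_{1}/2$, $\pm(V_{1}+V_{2})/2$, $\pm V/2$ — so each comparison is a low-dimensional minimization. Type $213$ follows at once from the small-volume limit: since $f\ge f(0)$, every type's optimal perimeter is at least $(m+1)f(0)$, namely at least $5f(0)$ or $6f(0)$ for the split types, while $213$'s optimal perimeter tends to $4f(0)$ as $V\to 0$ with fixed proportions, so the perimeter minimizer is of type $213$ for all sufficiently small volumes. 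For $3123$, $2313$ and $32123$ I would take a density whose logarithm has a long, nearly flat stretch near $0$ followed by rapid growth, glued so as to be merely $C^{1}$ (which is why the statement claims only $C^{1}$), so that the extra walls of these types can be parked near the origin almost for free while being forced outward is very costly; a numerical search then locates volume triples in which each of these three strictly beats the other three types, and since strict inequality among finitely many continuous functions is an open condition the computation is conclusive. Arranging a single density to realize all four types is then just a matter of recording four such volume triples.
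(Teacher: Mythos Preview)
Your broad architecture matches the paper's: a structural reduction to contiguous configurations coded by words, a combinatorial elimination of words, and numerical existence for the second assertion. The structural step you sketch (no exterior chambers, origin in the configuration) is essentially the content the paper imports from \cite{Bo} and packages as the ``standard nested bubble'' classification (Proposition~2.3).

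The genuine gap is in the elimination. Your ``clustering lemma'' and ``equal-wall-count comparison'' are the right heuristics, but the paper's experience is that they do not by themselves close the hardest cases. The paper proceeds in two tiers. The first tier (Proposition~3.2) is a pure rearrangement argument driven by a sharp pointwise inequality, the Interval Squeezing Lemma~3.1: if $x_0\le\cdots\le x_n$ and $y_i\in[x_{i-1},x_i]$ then $\sum f(x_i)>\sum f(y_i)$. This handles every ``fewer walls'' elimination and several equal-wall ones, and cuts the list to ten words. The second tier (Proposition~4.2) is where rearrangement alone fails. In particular, eliminating $\mid R_1\mid R_2\mid R_3\mid$ in favor of $\mid R_2\mid R_1\mid R_3\mid$ cannot be done by a direct comparison: the paper passes to volume coordinates, invokes the first-variation equilibrium inequalities $\sum f'_L(w_i)\le 0\le \sum f'_R(w_i)$, and uses the auxiliary Lemma~4.1 (that $f'_R(v_1)+f'_R(v_2)>0\Rightarrow v_1+v_2\ge 0$, and its dual) to extract metric information about the wall positions. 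Even then, for log-convex but not strictly log-convex $f$ an additional sliding argument along linear stretches of $f$ is needed. Your proposal to handle this case ``by an equal-wall-count comparison'' underestimates it; you should expect to need the equilibrium conditions, not just convexity of the perimeter functional.

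For the existence half, your small-volume argument for type $213$ is clean (and not spelled out in the paper). Your proposed density --- nearly flat near the origin, then steep, glued $C^1$ --- is a different mechanism from the paper's choice. The paper works in volume coordinates with $f_2(V)=\lvert V\rvert\sqrt{\log(\lvert V\rvert+1)}+0.01$, which is not flat near $0$ but has a very small minimum value $f_2(0)=0.01$; the cheapness of extra walls comes from that small value, not from flatness, and the $C^1$ (not $C^2$) regularity comes from the $\lvert V\rvert$ factor rather than from gluing. Either idea can plausibly realize all four types, and both ultimately rely on numerically certifying strict inequalities, as you say.
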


\begin{figure}[h]
\includegraphics[width=225pt]{{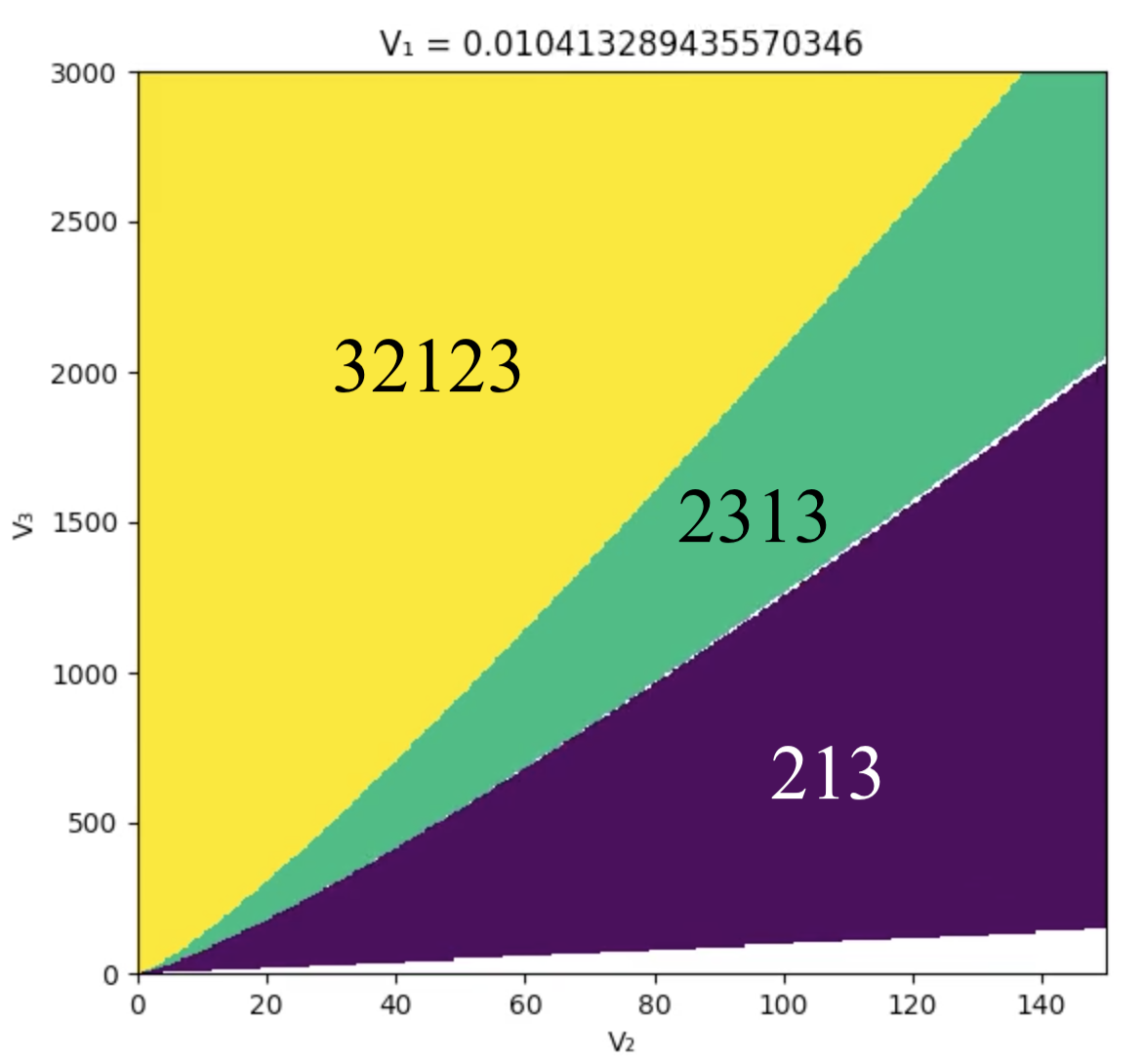}}
\includegraphics[width=225pt]{{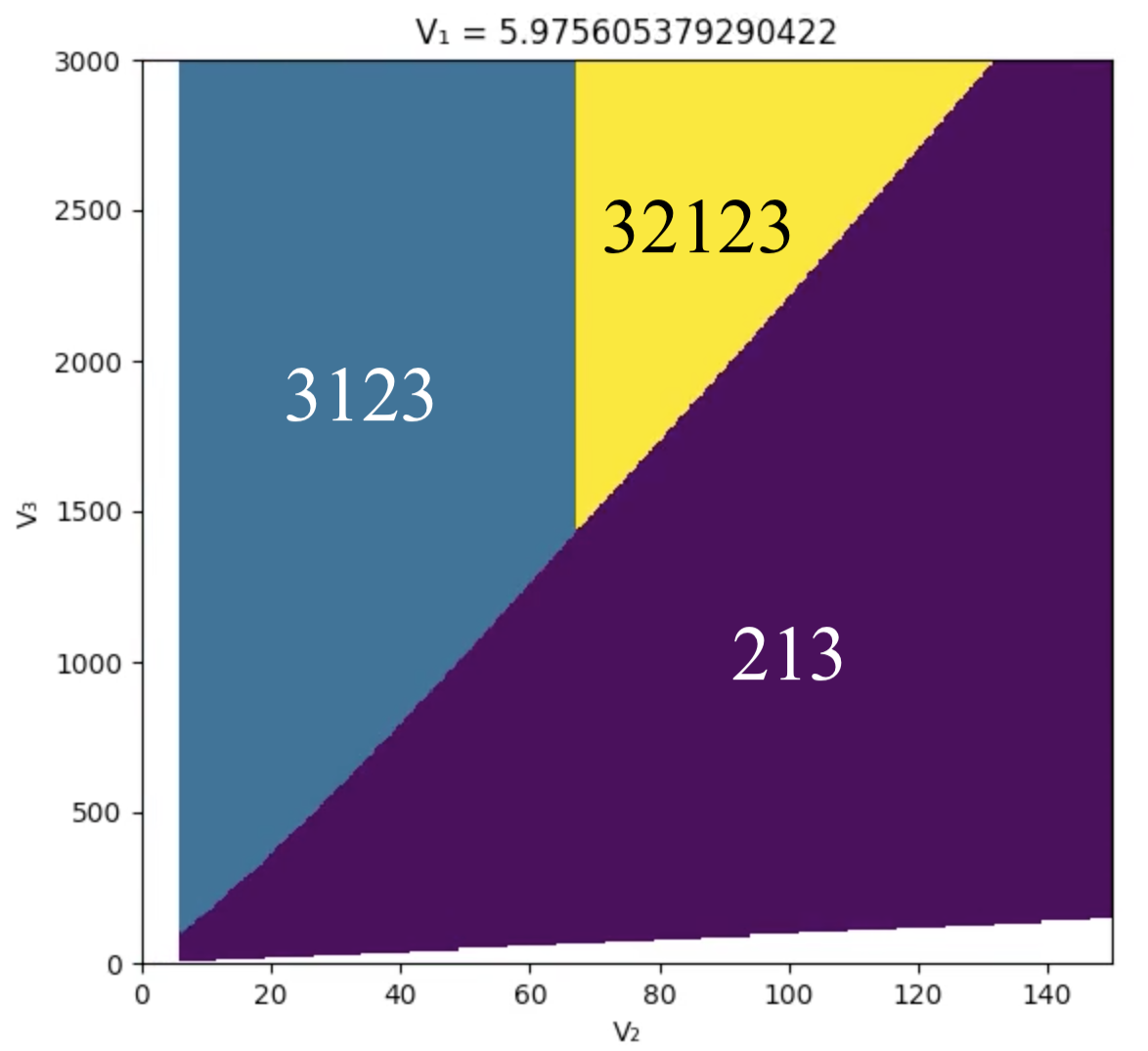}}
\caption{Numerical proof (conclusive) of the existence of four types of minimizers for the density $f_2$ of Section \ref{sec:code}.
Colors correspond to types as: \textcolor{violet}{purple} = 213; \textcolor{blue}{blue} = 3123; \textcolor{teal}{green} = 2313; \textcolor{orange}{yellow} = 32123.
See more information in Figure \ref{fig:seconddensity}.}
\label{fig:previewplot}
\end{figure}

The isoperimetric problem with density was studied by Bobkov and Houdr\'{e} \cite{BH}, who studied its connections with Sobolev-type inequalities, and Bayle \cite{Ba}, who studied isoperimetric profiles.
An important case is $\R^N$ with smooth, radially symmetric, log-convex density, where the log-convex density theorem, conjectured by Rosales et al. \cite{RCBM} and proved by Chambers \cite{Ch}, states that spheres centered at the origin are isoperimetric.

The double bubble theorem \cite[Chapt. 14]{Mo} states that in $\R^N$, the standard double bubble
consisting of three spherical caps meeting at 120 degrees is the least-perimeter double bubble.
This was proven in $\R^3$ by Hutchings et al. \cite{HMRR} and in $\R^N$ by Reichardt \cite{Re}.
There are results on double bubbles in the sphere $\mb{S}^N$, hyperbolic space $\mb{H}^N$, flat tori $\mb{T}^2$ and $\mb{T}^3$, and Gauss space (Euclidean space with density $e^{-r^2}$); see \cite[Chapt. 19]{Mo}. Recently, Milman and Neeman \cite{MN} proved the Gaussian double bubble conjecture, which states that the solution is three half-hyperplanes meeting at 120 degrees.

For more than two bubbles, Wichiramala \cite{Wi} proved the triple bubble conjecture in $\R^2$ that the standard triple bubble is perimeter minimizing. Milman and Neeman \cite{MN2} also recently proved the Gaussian multi-bubble conjecture.

Bongiovanni et al. \cite{Bo} solved the double bubble problem on $\R$ with smooth, symmetric, strictly log-convex density $f$ and characterized the transitions of minimizers when $(\log f)'$ is unbounded.
The author \cite{So} extended the characterization to the case where $(\log f)'$ is bounded.

This paper is organized as follows. Section \ref{sec:nstruct} gives results on structures of $n$-bubbles
on $\R$ that improve on those of Bongiovanni et al. \cite{Bo}.
In particular, we show that a perimeter-minimizing $n$-bubble must be a ``standard nested bubble,'' defined in Definition \ref{def:nested}.

In Section \ref{sec:types}, under the mild condition that the density is nonincreasing on $(-\infty,0]$ and nondecreasing on $[0,\infty)$, we narrow down the types of perimeter-minimizing triple bubbles to ten possible types.
Some arguments in this section are based on personal communication with Antonio Ca\~{n}ete.

Section \ref{sec:types2} proves our main result that there are four types of perimeter-minimizing triple bubbles
under the hypotheses of Theorem \ref{thm:main}. We show this result by arguments based on rearrangements and equilibrium conditions and draw from tools developed by Bongiovanni et al. \cite{Bo}.

Section \ref{sec:code} gives numerical proof of the existence of all four types of minimizers
for a certain symmetric, strictly log-convex density. The code used for this purpose is more efficient than the one used in our previous work \cite{Bo}. The plots obtained will also be useful in forming
conjectures in Section \ref{sec:conj}.

Finally, we state in Section \ref{sec:conj} conjectures on transitions of minimizers as volumes vary,
which may serve as a basis for further work.

Results from \cite{Bo} that are used in this work are collected in the Appendix.

\subsection*{Acknowledgments}
I would like to thank Antonio Ca\~{n}ete, Eliot Bongiovanni, and Frank Morgan for discussions on this problem.
Antonio Ca\~{n}ete initiated the discussion, contributed questions and arguments, and gave comments on a draft of this paper; Eliot Bongiovanni came up with conjectures and made progress in various directions; and Frank Morgan gave comments on my arguments and the writing of this paper.

\section{$n$-Bubble Structure}
\label{sec:nstruct}

In this section, we study perimeter-minimizing $n$-bubbles on $\R$ with a density that is nonincreasing on $(-\infty,0]$ and nondecreasing on $[0,\infty)$.
Our results show that a perimeter-minimizing $n$-bubble must be a ``standard nested bubble'': standard multi-bubbles contained within one another and defined in Definition \ref{def:nested}.
This improves on the results of Bongiovanni et al. \cite[Prop. 3.8]{Bo}.
Figure \ref{fig:nested2d} shows an example of a 2D nested bubble, and Figure \ref{fig:nested1d} gives an example of a 1D nested bubble.

\begin{figure}[h]
\includegraphics[width=200pt]{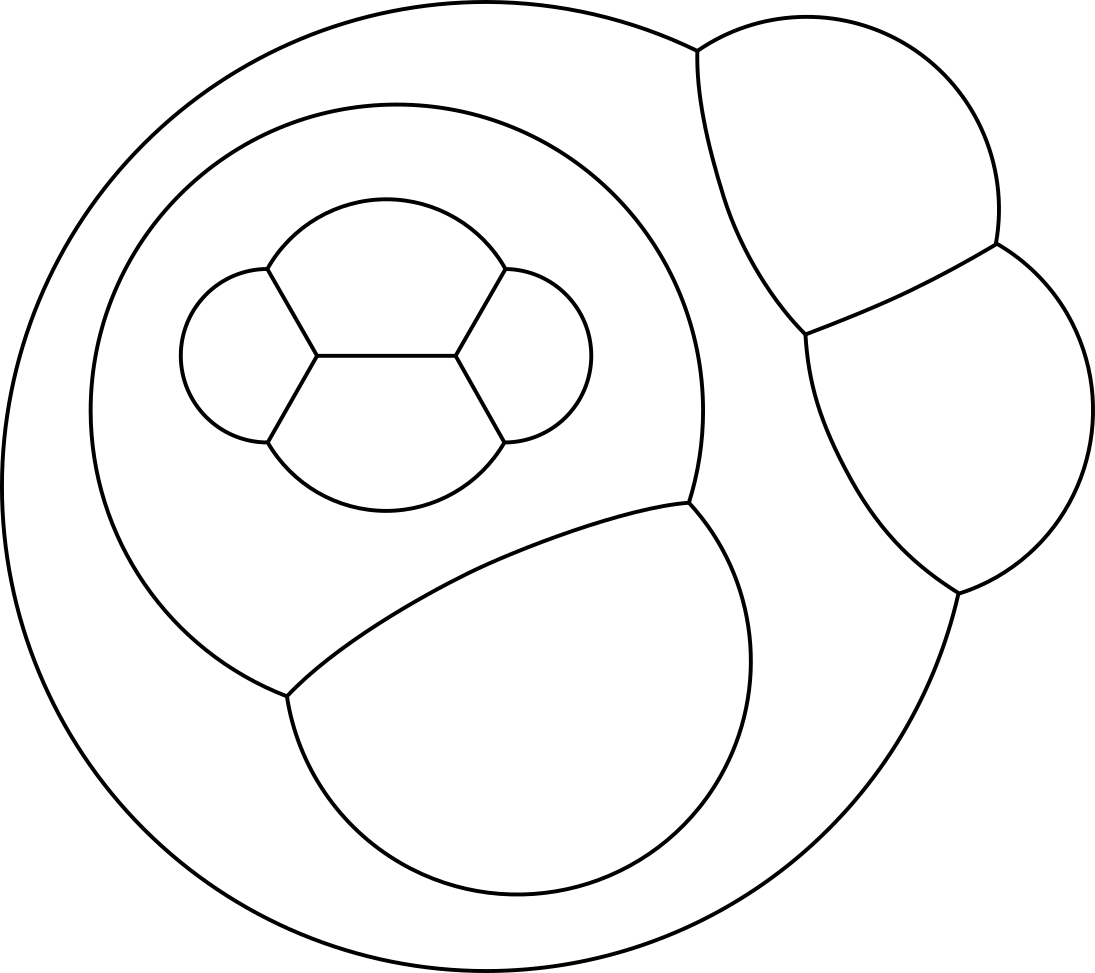}
\caption{2D nested bubble of type (3,2,4).}
\label{fig:nested2d}
\end{figure}

\begin{figure}[h]
\includegraphics[width=330pt]{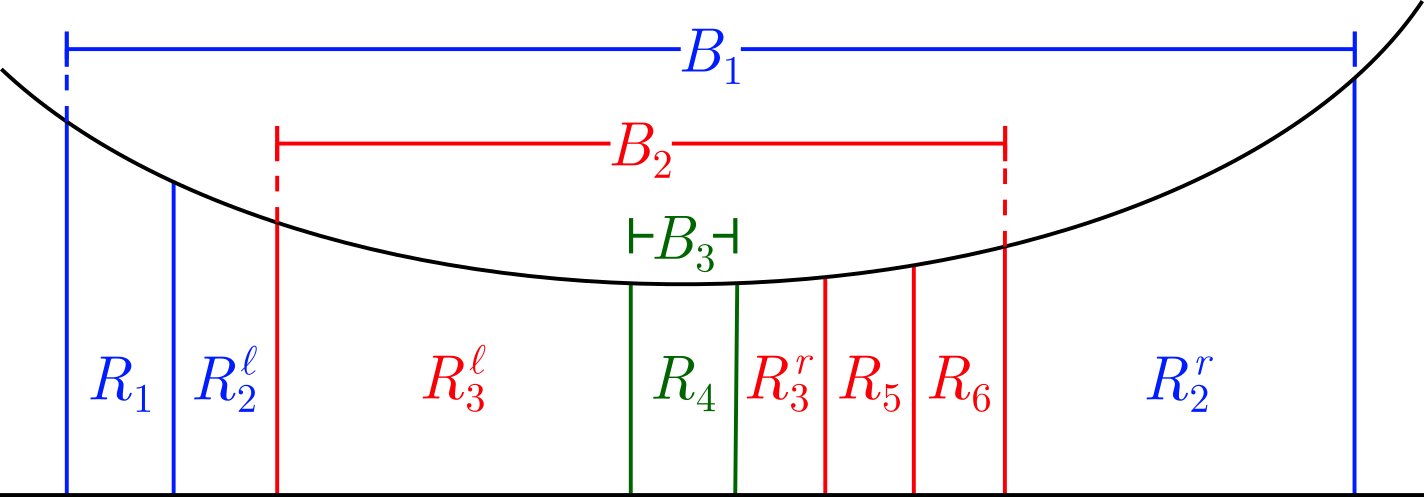}
\caption{1D standard nested bubble of type $(2,3,1)$, defined in Definition \ref{def:nested}.}
\label{fig:nested1d}
\end{figure}

We start with definitions. Consider a perimeter-minimizing $n$-bubble on $\R$ with prescribed volumes $V_1 \leq V_2 \leq \cdots \leq V_n$. The bubble with volume $V_a$ is called ``bubble $a$.'' If bubble $a$ has one component, let $R_a$ denote this component. If bubble $a$ has two components, let $R_a^\ell$ and $R_a^r$ denote the left and right components and $V_a^\ell$ and $V_a^r$ denote their volumes, respectively.
The following two definitions are only for 1D $n$-bubbles.

\begin{definition}
On $\R$ with density, a \emph{standard $n$-bubble} is an $n$-bubble with contiguous components in equilibrium where every bubble has one component.
Here, ``in equilibrium'' means being a local minimizer of the perimeter functional.
\end{definition}

\begin{definition}
\label{def:nested}
Let $k_1,\dots,k_m$ be positive integers.
An $n$-bubble $B$ is a \emph{standard nested bubble of type $(k_1,\dots,k_m)$}
if there are standard $k_i$-bubbles $B_i$ with disjoint boundaries such that
\begin{enumerate}
\item The boundary points of $B$ consist exactly of the boundary points of the $B_i$'s;
\item $B_{i+1}$ is contained in a single component of $B_i$;
\item In $B$, the components to the left and right of $B_i$ for $i>1$ are a single bubble, and no other bubbles of $B$ have more than one component.
\end{enumerate}
\end{definition}

See Figure \ref{fig:nested1d}. Notice that for a standard nested $n$-bubble of type $(k_1,\dots,k_m)$, the $k_i$'s are uniquely determined and sum to $n$. The triple interval \cite[Def. 4.1]{Bo} on the right-hand side of Figure \ref{fig:doubletriple} is a standard nested bubble of type $(1,1)$.

It is easily seen that the requirements for an $n$-bubble $B$ to be a standard nested bubble is equivalent to the following.
\begin{enumerate}[label=(\roman*)]
\item $B$ consists of contiguous intervals;
\item Every bubble of $B$ has at most two components;
\item If bubbles $a$ and $b$ both have two components, then the order of their components is either $R_a^\ell,R_b^\ell,R_b^r,R_a^r$
or $R_b^\ell,R_a^\ell,R_a^r,R_b^r$.
\end{enumerate}

We now prove the following proposition on the structure of 1D perimeter-minimizing $n$-bubbles.

\begin{prop}
\label{prop:1dstruct}
On $\R$ with continuous density that is nonincreasing on $(-\infty,0]$ and nondecreasing on $[0,\infty)$, a perimeter-minimizing $n$-bubble is a standard nested bubble.
\end{prop}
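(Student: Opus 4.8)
The plan is to characterize perimeter-minimizing $n$-bubbles through a sequence of structural reductions, following and refining the approach of Bongiovanni et al.\ \cite[Prop.\ 3.8]{Bo}. First I would recall or re-derive the basic facts: a perimeter-minimizing $n$-bubble on $\R$ consists of finitely many bounded intervals (otherwise one could reduce perimeter by rearrangement), each bubble is a union of such intervals, the complement is connected or at least the configuration can be taken with no ``gaps'' that increase perimeter, and the whole configuration lies in a bounded interval. The key tool throughout is that for a density nonincreasing on $(-\infty,0]$ and nondecreasing on $[0,\infty)$, \emph{translating} or \emph{reflecting} a sub-configuration toward the origin does not increase perimeter, and \emph{merging} adjacent intervals of the same bubble (eliminating empty space or other bubbles' intervals between them) strictly decreases perimeter unless forbidden. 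These monotone-rearrangement and merging arguments are the engine.

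The main steps, in order, would be: (1) Show the minimizer consists of contiguous intervals with no empty region in its convex hull — any empty gap can be closed by sliding, strictly decreasing perimeter, contradicting minimality; this gives property (i). (2) Show every bubble has at most two components — property (ii). The idea: if some bubble $a$ has three or more components, consider its leftmost and rightmost components together with whatever lies between; by a rearrangement that moves the ``middle'' components of bubble $a$ outward to merge with the left or right component of $a$ (or by a more careful surgery using that between two components of $a$ the other bubbles form a sub-bubble one can slide), one reduces the number of boundary points without increasing perimeter, and strictly decreases it, contradicting minimality. One must be careful that the rearrangement keeps all volumes fixed; this is arranged by moving whole intervals rather than changing their lengths. (3) Show the nesting/non-crossing condition (iii): if bubbles $a$ and $b$ each have two components arranged in the ``interleaved'' order $R_a^\ell, R_b^\ell, R_a^r, R_b^r$, derive a contradiction. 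Here the argument is that one can swap the positions of $R_b^\ell$ and $R_a^r$ (or reflect the block between certain boundary points) to ``uncross'' them; because the density is monotone away from the origin one checks this does not increase perimeter, and because it reduces the crossing number while keeping the bubble a valid $n$-bubble with the same volumes, iterating leads to a non-crossing (nested) configuration, and strict inequality somewhere gives the contradiction with minimality. (4) Invoke the equivalence, stated just before the proposition, between (i)--(iii) and being a standard nested bubble, plus the equilibrium (first-variation) conditions that any minimizer must satisfy, to conclude. Along the way I would use that within each ``standard $k_i$-bubble'' piece the components are contiguous and in equilibrium, which follows from the same local rearrangement arguments applied inside the relevant sub-interval.

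I expect the main obstacle to be step (2)--(3): ruling out three-or-more components and ruling out crossings while \emph{preserving all $n$ volumes simultaneously and not increasing perimeter}. The subtlety is that a naive slide or reflection that fixes one bubble's volume may disturb another's, or may increase perimeter at one of the newly created or destroyed contact points precisely because the monotonicity of the density is only weak (nonincreasing/nondecreasing, not strict) — so one gets $\leq$ rather than $<$, and must then argue that equality forces a rigid structure that is still either non-minimal for another reason or already nested. Handling the boundary cases where a component touches the origin, or where reflected pieces overlap the origin, requires care: one may need to split the rearrangement into a ``left part'' and ``right part'' about $0$ and treat them separately, using that the density restricted to $(-\infty,0]$ and to $[0,\infty)$ is monotone in a single direction. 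I would organize the proof so that each reduction is ``if the structure fails, perform an explicit volume-preserving rearrangement with $\Delta(\text{perimeter}) \le 0$ and with strictly fewer boundary points,'' and then note that a minimizer with the fewest boundary points among all minimizers of the given volumes must therefore already satisfy (i)--(iii); the equilibrium conditions are then automatic from the first variation, completing the identification with a standard nested bubble.
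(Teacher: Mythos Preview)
Your outline is correct through steps (1), (2), and (4): the paper simply cites \cite[Prop.~3.5]{Bo} for (i) and \cite[Lemma~3.7]{Bo} for (ii), then invokes the stated equivalence with standard nested bubbles. The gap is in your step (3). The mechanism you propose---swapping $R_b^\ell$ with $R_a^r$, or reflecting a block---does not work under the hypotheses here. The density is \emph{not} assumed symmetric, so reflecting a block has no reason to preserve perimeter. A swap of two intervals of unequal volume shifts all intervening boundary points by amounts that bare monotonicity of $f$ on each half-line does not control; and even if perimeter were preserved, after the swap each of $a$ and $b$ still has two components, so nothing merges and no boundary point is eliminated. Your worry about obtaining only $\le$ rather than $<$ is a symptom of this: the rearrangement you describe gives no natural source of strict decrease.

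The paper's argument for (iii) is a simultaneous \emph{slide}, not a swap or reflection. First, \cite[Lemma~3.7]{Bo} also forces every left component to lie left of every right component, which immediately rules out the order $R_a^\ell,R_a^r,R_b^\ell,R_b^r$ (a case you did not address). For the interleaved order $R_a^\ell,R_b^\ell,R_a^r,R_b^r$, translate the entire block of components and boundary points between $R_a^\ell$ and $R_b^\ell$ to the right, and simultaneously the block between $R_a^r$ and $R_b^r$ to the left, at equal volume rates. This transfers volume from $R_b^\ell$ to $R_a^\ell$ and from $R_a^r$ to $R_b^r$ in equal amounts, so every bubble's volume is preserved. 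By \cite[Lemma~3.7]{Bo} the moving boundary points are all heading toward the origin, so perimeter is nonincreasing throughout. Continue until one of $R_b^\ell$ or $R_a^r$ vanishes: a boundary point of strictly positive density disappears, and perimeter strictly drops. The strict inequality thus comes from eliminating a boundary point, not from strict monotonicity of $f$---which resolves exactly the obstacle you flagged.
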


\begin{proof}
We show that any perimeter-minimizing $n$-bubble satisfies conditions (i)--(iii) stated above.
Point (i) follows from \cite[Prop. 3.5]{Bo}, and point (ii) follows from  \cite[Lemma 3.7]{Bo}.
It remains to verify point (iii).

Without loss of generality, let $R_a^\ell$ be the leftmost of the four components. By \cite[Lemma 3.7]{Bo}, any left component must be to the left of any right component, so the order $R_a^\ell,R_a^r,R_b^\ell,R_b^r$ is impossible.

Finally, we show that an $n$-bubble with the order $R_a^\ell,R_b^\ell,R_a^r,R_b^r$ is not perimeter-minimizing by an argument similar to the proof of \cite[Prop. 3.8]{Bo}.
Move all components and boundary points between $R_a^\ell$ and $R_b^\ell$ to the right and all components and boundary points between $R_a^r$ and $R_b^r$ to the left in such a way as to preserve volumes of bubbles.
Observe that moving to the right increases the volume of bubble $a$ and decreases the volume of bubble $b$, and vice versa for moving to the left. Hence matching the velocities of moving to the right and to the left allows us to preserve volumes of bubbles $a$ and $b$.

By \cite[Lemma 3.7]{Bo}, perimeter does not increase during the move. But when one of $R_b^\ell$ or $R_a^r$ disappears, the perimeter will decrease. Therefore the only possible order is $R_a^\ell,R_b^\ell,R_b^r,R_a^r$.
\end{proof}

Proposition \ref{prop:1dstruct} indeed improves on our previous characterizations of perimeter-minimizing $n$-bubbles,
as it immediately implies \cite[Prop. 3.8]{Bo}.

We close this section with two conjectures. First, we think that the characterization in Proposition \ref{prop:1dstruct} is optimal in the sense that a standard nested bubble of every type can be perimeter minimizing.
This is stated in Conjecture \ref{conj:structoptimal}.
Our previous work \cite[Thm. 4.15]{Bo} proved this for double bubbles and Section \ref{sec:code} proves it for triple bubbles.

\begin{conj}
\label{conj:structoptimal}
For any positive integers $n$ and $k_1,\dots,k_m$ that sum to $n$, a standard nested bubble of type $(k_1,\dots,k_m)$ is a perimeter-minimizing $n$-bubble for some symmetric, log-convex density on $\R$ and some prescribed volumes.
\end{conj}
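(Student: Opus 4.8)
The plan is to prove Conjecture~\ref{conj:structoptimal} by induction on the number of layers $m$, realizing the target type $(k_1,\dots,k_m)$ through a \emph{scale-separated} density and volumes for which the nesting hierarchy is forced one layer at a time. The starting point is that, by Proposition~\ref{prop:1dstruct}, any perimeter minimizer for a symmetric log-convex density is \emph{already} a standard nested bubble; the task is therefore not to prove nestedness but to select a density and volumes so that the target composition beats every competing nested type. A key simplification is that all nested types enclosing the same total volume $S=\sum_i V_i$ occupy, when centered, the same interval $[-S/2,S/2]$, so their outermost boundary points coincide and the comparison reduces to the weighted sum over the \emph{interior} boundary points: a type is favored when it places few interior boundary points and places them where the density is small, i.e.\ near the origin. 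Nesting trades extra interior boundary points for the ability to seat a small bubble symmetrically about the origin.

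From the $n=2$ analysis underlying \cite{Bo} one extracts the two mechanisms that drive the whole construction. First, nesting a bubble of volume $V_1$ inside one of volume $V_2$ lowers perimeter precisely when the density is steep enough and the volumes are disparate (schematically, when $2f(V_1/2)<f((V_2-V_1)/2)$), so disparate volumes plus steepness force wrapping. Second, and crucially, when two volumes are \emph{comparable} nesting never helps no matter how steep the density, since wrapping then fails to seat any small bubble at the low-density center. Because log-convexity forces $(\log f)'$ to increase with $\abs{x}$, the density is automatically flattest near the origin and steepest far out, which is exactly the monotonicity a nested hierarchy needs. I would therefore assign the bubbles of layer $i$ a common volume scale $s_i$ with well-separated scales $s_1\gg s_2\gg\cdots\gg s_m$ (the outermost, wrapping, layer largest), take the largest volume within each layer to be the one that wraps, and choose a symmetric log-convex density whose log-derivative jumps up by a large amount across each scale gap $s_{i+1}\ll\abs{x}\ll s_i$. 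The comparable volumes within a layer keep that layer contiguous (a standard $k_i$-bubble), while the disparate volumes and steepness between consecutive scales force each outer layer to wrap the next.

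Concretely I would run the induction by \emph{peeling the outermost layer}. The base case $m=1$ is the contiguous standard $n$-bubble of type $(n)$, which minimizes among nested types for a density with very small $(\log f)'$ because it uses the fewest interior boundary points and the seating advantage of nesting vanishes as the density flattens. For the step, given by hypothesis a density $g$ and volumes realizing $(k_2,\dots,k_m)$ as the unique minimizer inside some interval $[-\sigma,\sigma]$, I would glue on $k_1$ much larger volumes and extend $g$ outside $[-\sigma,\sigma]$ with a strictly larger log-slope, preserving log-convexity since $(\log f)'$ only increases. One then shows that the global minimizer is the target: the large outer volumes, confronted with a steep density, must wrap the inner configuration rather than extend past the steep region, producing one split bubble per outer layer and a standard $k_1$-bubble at the outer scale; the remaining inner structure, living where $f$ is essentially $g$ up to rescaling, reproduces $(k_2,\dots,k_m)$ by the inductive hypothesis. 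Positions within each layer are pinned by the equilibrium (density-balance) conditions developed in \cite{Bo}, and the $n=3$ cases treated numerically in Section~\ref{sec:code} serve as a consistency check for the $m\le 3$ instances of this scheme.

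The main obstacle is precisely this last ``factorization'' argument: making rigorous the claim that, under sufficient scale separation and steepness, the global minimizer decouples into an outer layer plus an inner sub-bubble that is itself optimal, uniformly over all $2^{n-1}$ competing nested types. This requires a quantitative locality estimate showing that the optimal inner structure is stable against the outer perturbation, together with a clean monotone comparison guaranteeing that the target strictly beats every competitor rather than tying with a neighboring composition near transition volumes. I expect the within-layer bookkeeping---verifying that exactly the largest bubble of each layer wraps and that no same-scale pair nests---to be delicate, and the uniform control over all competitors simultaneously to be the genuinely hard step; an induction that restricts the problem to the sub-interval $[-\sigma,\sigma]$ with the induced density seems the most promising route to tame the combinatorial explosion.
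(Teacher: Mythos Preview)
The paper does not prove Conjecture~\ref{conj:structoptimal}; it is stated as an open problem. What the paper establishes is only the cases $n\le 3$: the case $n=2$ is attributed to \cite[Thm.~4.15]{Bo}, and the case $n=3$ is handled in Section~\ref{sec:code} by \emph{numerical computation} for the single explicit density $f_2(V)=\abs{V}\sqrt{\log(\abs{V}+1)}+0.01$, exhibiting specific volume triples for which each of the four admissible nested types $213$, $3123$, $2313$, $32123$ is the minimizer (Table~\ref{tab:fourtypes}). There is no general argument in the paper to compare your proposal against.

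As a strategy for attacking the conjecture, your scale-separation and layer-peeling induction is a natural idea, and you correctly flag the decoupling step as the real difficulty. One concrete error worth noting: your claim that ``all nested types enclosing the same total volume $S$ occupy, when centered, the same interval $[-S/2,S/2]$'' is false. The outermost layer $B_1$ of a standard nested bubble is a standard $k_1$-bubble in equilibrium, and for $k_1>1$ with unequal component volumes the equilibrium condition \eqref{eq:equigeneral} does not place it symmetrically about the origin (e.g.\ type $213$ with $V_2\ne V_3$ is not centered). Hence the outermost boundary points do \emph{not} cancel when comparing types, and the reduction to ``interior boundary points only'' breaks down. This does not necessarily doom the approach, but it means the cost comparison between competing types has an additional term you have not accounted for, and the clean heuristic ``fewer interior points near the origin wins'' needs to be replaced by an honest estimate. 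Beyond this, your proposal remains a plan rather than a proof: the inductive step requires a quantitative stability statement (that the optimal inner configuration is unchanged, up to negligible error, when embedded in the outer layer) uniform over exponentially many competitors, and you have not indicated how to obtain it. The paper's approach for $n=3$ sidesteps all of this by direct computation, which of course does not generalize.
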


Finally, Proposition \ref{prop:1dstruct} might be true in higher dimensions for smooth, radially symmetric, strictly log-convex densities, for an appropriate notion of ``standard nested bubble.''
This is stated in Conjecture \ref{conj:structhighdim}.
The case of single bubbles is the log-convex density theorem \cite{Ch}, and the case of double bubbles was conjectured in our previous work \cite[Conj. 7.1]{Bo}.

\begin{conj}
\label{conj:structhighdim}
On $\R^N$ with smooth, radially symmetric, strictly log-convex density, a perimeter-minimizing $n$-bubble is a standard nested bubble.
\end{conj}

\section{Triple Bubbles Under Mild Conditions on Density}
\label{sec:types}

We now focus on perimeter-minimizing triple bubbles on $\R$.
Let the notation
$$\mid I_1 \mid I_2 \mid \dots \mid I_m \mid,$$
where $I_i$'s are compact intervals, denote a multi-bubble with contiguous components $I_1,I_2,\dots,I_m$ from left to right in this order.
By Proposition \ref{prop:1dstruct} applied to triple bubbles, the following is immediate.

\begin{prop}
\label{prop:init}
On $\R$ with continuous density that is nonincreasing on $(-\infty,0]$ and nondecreasing on $[0,\infty)$, a perimeter-minimizing triple bubble is, up to reflection, of one of the forms:
\begin{align*}
&(1) \mid R_a \mid R_b \mid R_c \mid; &&(2) \mid R_a^\ell \mid R_b \mid R_c \mid R_a^r \mid; \\
&(3)\mid R_a \mid R_b^\ell \mid R_c \mid R_b^r \mid; &&(4) \mid R_a^\ell \mid R_b^\ell \mid R_c \mid R_b^r \mid R_a^r \mid,
\end{align*}
where $\set{a,b,c}=\set{1,2,3}$.
\end{prop}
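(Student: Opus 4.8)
The plan is to derive this directly from Proposition \ref{prop:1dstruct}: a perimeter-minimizing triple bubble is a standard nested bubble, and for $n=3$ there are only finitely many types to enumerate. Since a standard nested bubble of type $(k_1,\dots,k_m)$ has the $k_i$ summing to $n$, the only possibilities for $n=3$ are the types $(3)$, $(1,2)$, $(2,1)$, and $(1,1,1)$, and the claim reduces to checking that these four types correspond, up to reflection and relabeling, to the four forms listed.

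First I would unpack each type using Definition \ref{def:nested}. For type $(3)$, the bubble is a single standard $3$-bubble, so all three bubbles have one component and we get form $(1)$. For type $(1,2)$, a standard $2$-bubble sits inside the single component of a standard $1$-bubble; the outer bubble is split into a left and a right component by the inner pair, while the inner two bubbles each have one component, giving $\mid R_a^\ell \mid R_b \mid R_c \mid R_a^r \mid$, which is form $(2)$. For type $(2,1)$, a standard $1$-bubble sits inside one component of a standard $2$-bubble; the component it splits becomes a two-component bubble flanking the inner single bubble, and the other component of the outer $2$-bubble lies to one side, giving either $\mid R_a \mid R_b^\ell \mid R_c \mid R_b^r \mid$ or its reflection, i.e. form $(3)$. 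For type $(1,1,1)$, nesting three standard $1$-bubbles produces $\mid R_a^\ell \mid R_b^\ell \mid R_c \mid R_b^r \mid R_a^r \mid$, which is form $(4)$. In each case the assignment $\set{a,b,c}=\set{1,2,3}$ is unconstrained, as in the statement.

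The only point requiring care — and it is bookkeeping rather than a genuine obstacle — is tracking how the left/right labels on a two-component bubble behave under reflection, so that type $(2,1)$ is correctly identified with form $(3)$ regardless of which component of the outer $2$-bubble contains the inner bubble. Everything substantive, in particular that the components are contiguous intervals, that no bubble has more than two components, and that two two-component bubbles must be nested, is already contained in Proposition \ref{prop:1dstruct}; hence the proof is immediate. (Alternatively, one can argue straight from the equivalent conditions (i)--(iii) stated before Proposition \ref{prop:1dstruct}, splitting into cases according to how many of the three bubbles have two components and ruling out the case of three two-component bubbles because its innermost bubble would have adjacent components.)
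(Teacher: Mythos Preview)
Your proposal is correct and follows exactly the paper's approach: the paper simply states that the result is immediate from Proposition~\ref{prop:1dstruct} applied to triple bubbles, and your enumeration of the compositions $(3)$, $(1,2)$, $(2,1)$, $(1,1,1)$ spells out that immediacy. The only difference is that you supply the bookkeeping the paper leaves implicit.
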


To further reduce the number of possibilities, we will use the following lemma.
The idea for this lemma comes from arguments of Antonio Ca\~{n}ete.

\begin{lemma}[Interval Squeezing]
\label{lem:squeeze}
Let $f$ be a positive function that is nonincreasing on $(-\infty,0]$ and nondecreasing on $[0,\infty)$.
Suppose that $x_0 \leq x_1 \leq \dots \leq x_n$ with $y_i \in [x_{i-1},x_i]$.
Then $f(x_0)+\dots+f(x_n) > f(y_1)+\dots+f(y_n)$.
\end{lemma}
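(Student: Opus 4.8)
The plan is to bound each $f(y_i)$ by a single term of the left‑hand side and then observe that these terms leave out exactly one strictly positive term of the full left‑hand sum, which is what forces strictness.

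First I would record the elementary pointwise fact: if $a \le b$ and $z \in [a,b]$, then $f(z) \le \max\set{f(a),f(b)}$. Indeed, if $z \le 0$ then $a \le z \le 0$, and since $f$ is nonincreasing on $(-\infty,0]$ we get $f(z) \le f(a)$; if $z \ge 0$ then $0 \le z \le b$, and since $f$ is nondecreasing on $[0,\infty)$ we get $f(z) \le f(b)$. Applying this with $a = x_{i-1}$, $b = x_i$, $z = y_i$ gives $f(y_i) \le \max\set{f(x_{i-1}),f(x_i)}$ for each $i \in \set{1,\dots,n}$.

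Next I would use the ``valley'' shape of the finite sequence $a_i := f(x_i)$. Because $x_0 \le \dots \le x_n$ and $f$ is nonincreasing on $(-\infty,0]$ and nondecreasing on $[0,\infty)$, there is an index $m \in \set{0,\dots,n}$ with $a_0 \ge \dots \ge a_m$ and $a_m \le \dots \le a_n$: take $m = n$ if every $x_i \le 0$, take $m = 0$ if every $x_i \ge 0$, and otherwise let $k$ be the largest index with $x_k \le 0$ (so $a_0 \ge \dots \ge a_k$ and $a_{k+1} \le \dots \le a_n$) and set $m = k$ or $m = k+1$ according to whether $a_k \le a_{k+1}$ or not. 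For such $m$ we have $\max\set{a_{i-1},a_i} = a_{i-1}$ when $i \le m$ and $\max\set{a_{i-1},a_i} = a_i$ when $i > m$, so the telescoping identity
\[ \sum_{i=1}^n \max\set{a_{i-1},a_i} = \sum_{i=0}^{m-1} a_i + \sum_{i=m+1}^n a_i = \sum_{i=0}^n a_i - a_m \]
holds.

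Putting the two steps together,
\[ \sum_{i=1}^n f(y_i) \le \sum_{i=1}^n \max\set{f(x_{i-1}),f(x_i)} = \sum_{i=0}^n f(x_i) - f(x_m), \]
and since $f$ is positive we have $f(x_m) > 0$, which gives the claimed strict inequality. The only place that needs a bit of care — the main, and rather minor, obstacle — is the existence of the valley index $m$: the sequence $f(x_i)$ may fail to be monotone across the index where the $x_i$ change sign, and the short case split indicated above is exactly what resolves this. Everything else is immediate.
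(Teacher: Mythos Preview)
Your proof is correct. It differs from the paper's argument: the paper proceeds by induction on $n$, splitting on whether $y_1 \ge 0$ (in which case every $f(y_i) \le f(x_i)$ and the extra term $f(x_0)>0$ gives strictness) or $y_1 < 0$ (in which case $f(x_0) \ge f(y_1)$ and the inductive hypothesis handles the remaining terms). Your approach is non-inductive: you bound each $f(y_i)$ by $\max\{f(x_{i-1}),f(x_i)\}$, identify a valley index $m$ for the sequence $f(x_i)$, and observe that the sum of these maxima is exactly $\sum_i f(x_i) - f(x_m)$. The paper's induction is shorter and hides the structure; your argument makes explicit the reason the inequality is strict --- precisely one term, the minimum $f(x_m)$, is never needed to dominate any $f(y_i)$ --- and would adapt immediately if one wanted a quantitative lower bound on the gap.
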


\begin{proof}
We proceed by induction. The case $n=0$ is obvious.
For $n \geq 1$, if $y_1 \geq 0$, then $f(x_i) \geq f(y_i)$ and $f(x_0)>0$, so the inequality holds.
If $y_1 < 0$, then $f(x_0)\geq f(y_1)$, and we can apply the induction hypothesis.
\end{proof}

By using Lemma \ref{lem:squeeze}, we can strengthen Proposition \ref{prop:init} as follows.

\begin{prop}
\label{prop:typemonotone}
On $\R$ with continuous density that is nonincreasing on $(-\infty,0]$ and nondecreasing on $[0,\infty)$, a perimeter-minimizing triple bubble is, up to reflection, of one of the forms:
\begin{enumerate}[font=\upshape]
\item  $\mid R_2 \mid R_1 \mid R_3 \mid$, $\mid R_1 \mid R_2 \mid R_3 \mid$,
$\mid R_1 \mid R_3 \mid R_2 \mid$;
\item $\mid R_3^\ell \mid R_1 \mid R_2 \mid R_3^r \mid$, $\mid R_2^\ell \mid R_1 \mid R_3 \mid R_2^r \mid$;
\item $\mid R_2 \mid R_3^\ell \mid R_1 \mid R_3^r \mid$, $\mid R_1 \mid R_3^\ell \mid R_2 \mid R_3^r \mid$, $\mid R_3 \mid R_2^\ell \mid R_1 \mid R_2^r \mid$;
\item $\mid R_3^\ell \mid R_2^\ell \mid R_1 \mid R_2^r \mid R_3^r \mid$,
$\mid R_2^\ell \mid R_3^\ell \mid R_1 \mid R_3^r \mid R_2^r \mid$.
\end{enumerate}
\end{prop}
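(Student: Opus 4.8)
The plan is to take Proposition \ref{prop:init} as the starting point --- a perimeter-minimizing triple bubble is already known to be, up to reflection, of one of the four forms (1)--(4) --- and to discard, within each form, the labelings $\set{a,b,c}=\set{1,2,3}$ that do not appear in the statement. For form (1) there is nothing to discard: up to the reflection $R_a\leftrightarrow R_c$ the three classes are parametrized by the middle bubble $b$, and these are exactly $\mid R_2 \mid R_1 \mid R_3 \mid$, $\mid R_1 \mid R_2 \mid R_3 \mid$, $\mid R_1 \mid R_3 \mid R_2 \mid$. For forms (2), (3), and (4), I would show that each disallowed labeling is beaten by an explicit competitor triple bubble with the same prescribed volumes, contradicting minimality; the perimeter comparison is an application of Lemma \ref{lem:squeeze}.

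I would build every competitor the same way. Let the minimizer have boundary points $x_0<\dots<x_K$ (so $K=4$ for forms (2), (3) and $K=5$ for form (4)) and perimeter $f(x_0)+\dots+f(x_K)$; let $m_i$ be the volume of the $i$th interval $[x_{i-1},x_i]$. To produce a competitor I merge the two components of one of the two-component bubbles into a single interval --- and, if a two-component bubble remains, I keep the freedom to redistribute its volume between its two components --- and then lay the resulting $K-1$ intervals contiguously, in some left-to-right order, on the same support $[x_0,x_K]$. Since the total volume is unchanged, the new boundary points satisfy $z_0=x_0$ and $z_{K-1}=x_K$, and, writing $n_j$ for the volume of the $j$th new interval, $z_j\in[x_j,x_{j+1}]$ exactly when the partial sums interlace:
\[
  m_1+\dots+m_j \;\le\; n_1+\dots+n_j \;\le\; m_1+\dots+m_{j+1}, \qquad j=0,1,\dots,K-1.
\]
If the order and redistribution can be chosen so that this holds, then Lemma \ref{lem:squeeze}, applied to $x_0\le\dots\le x_K$ with $y_i:=z_{i-1}\in[x_{i-1},x_i]$, gives $f(x_0)+\dots+f(x_K)>f(z_0)+\dots+f(z_{K-1})$ --- the strictness coming precisely from the competitor having one fewer boundary point --- so the competitor has strictly smaller perimeter, contradicting minimality.

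It then remains to choose, for each disallowed labeling, the bubble to merge and the order of the new intervals, and to verify the interlacing. For form (2) the only disallowed labeling (up to reflection) is $\mid R_1^\ell \mid R_2 \mid R_3 \mid R_1^r \mid$; I would merge bubble $1$ and use the order $R_2,R_1,R_3$. For form (3), $\mid R_a \mid R_b^\ell \mid R_c \mid R_b^r \mid$ is off the list exactly when $b<c$; then I would merge bubble $b$ and use the order $R_a,R_b,R_c$, the only nontrivial check being $V_b^r<V_b\le V_c$. For form (4), $\mid R_a^\ell \mid R_b^\ell \mid R_c \mid R_b^r \mid R_a^r \mid$ is off the list exactly when $c\ne 1$; then bubble $1$ is one of the two split bubbles, and I would merge bubble $1$, keep the other split bubble split, and use the order (its left part), $R_1$, $R_c$, (its right part). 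When bubble $1$ is the inner split ($b=1$), the original split of the outer bubble already works; when bubble $1$ is the outer split ($a=1$), the interlacing inequalities cut out a nonempty interval of admissible new splits of bubble $b$. In every case the inequalities to be checked collapse, after cancellation, to instances of $V_i\le V_j$ for $i\le j$ and of $V_b^\ell,V_b^r<V_b$.

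The main obstacle is this last, case-dependent step: the competitor is not perfectly uniform, since which two-component bubble to merge, the order of the new intervals, and whether a redistribution of the surviving split bubble is needed all hinge on where bubble $1$ sits, so a short case analysis cannot be avoided, and in each case one must confirm that the partial-sum interlacing persists using only $V_1\le V_2\le V_3$. Degenerate outcomes --- e.g. a redistributed component of volume $0$, which simply turns the competitor into a bubble with one fewer split --- only make things easier. Everything else is immediate: that Proposition \ref{prop:init} applies, and that Lemma \ref{lem:squeeze} yields a strict inequality as soon as a boundary point is dropped.
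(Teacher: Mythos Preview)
Your proposal is correct and follows essentially the same route as the paper: start from Proposition~\ref{prop:init} and, for each disallowed labeling in forms (2)--(4), exhibit a competitor on the same support $[x_0,x_K]$ with one fewer boundary point, then invoke Lemma~\ref{lem:squeeze} via the partial-sum interlacing you describe. Your competitors for forms (2) and (3) coincide with the paper's (the paper phrases them as ``switch $R_1^\ell\leftrightarrow R_2$, $R_3\leftrightarrow R_1^r$'' and ``switch $R_c\leftrightarrow R_b^r$'' respectively).

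The only genuine divergence is in form~(4). The paper splits the disallowed labelings into the three with $b<c$, handled exactly as you do when $b=1$ (merge bubble $b$, keep bubble $a$'s original split), plus the single remaining case $(a,b,c)=(1,3,2)$, which it dispatches with a three-interval competitor $\mid R_2\mid R_3\mid R_1\mid$ after a without-loss-of-generality assumption $V_1^\ell\le V_3^r$; one of the five original boundary points is simply ``ignored'' in the application of Lemma~\ref{lem:squeeze}. You instead treat both $a=1$ cases at once by merging bubble~$1$ and keeping bubble~$b$ split with a \emph{redistributed} split $V_b^{\ell'}$, producing a four-interval competitor $\mid R_b^{\ell'}\mid R_1\mid R_c\mid R_b^{r'}\mid$. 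The admissible range for $V_b^{\ell'}$ is indeed nonempty using only $V_1\le V_b$ and $V_1\le V_c$, so your argument goes through. Your version is slightly more uniform and avoids the reflection-symmetry step; the paper's version avoids introducing the extra free parameter. Either way the substance is the same application of Lemma~\ref{lem:squeeze}.
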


\begin{proof}
We need to rule out some cases from Proposition \ref{prop:init} as not perimeter minimizing.

(2): We eliminate $\mid R_1^\ell \mid R_2 \mid R_3 \mid R_1^r \mid$. Let the boundary points be $x_0<x_1<x_2<x_3<x_4$.
Switch $R_1^\ell$ with $R_2$ and $R_3$ with $R_1^r$ to get $\mid R_2 \mid R_1 \mid R_3 \mid$ with boundary points $x_0<y_1<y_2<x_4$.
Then $y_1 \in [x_1,x_2]$ and $y_2 \in [x_2,x_3]$. By Lemma \ref{lem:squeeze}, $f(x_1)+f(x_2)+f(x_3) > f(y_1)+f(y_2)$, so the original configuration is not perimeter minimizing.

(3): We rule out $\mid R_a \mid R_b^\ell \mid R_c \mid R_b^r \mid$ with $b < c$.
Switch $R_c$ and $R_b^r$ to get $\mid R_a \mid R_b \mid R_c \mid$.
Then two boundary points disappear and one new boundary point appears between them.
By Lemma \ref{lem:squeeze}, the new configuration has less perimeter than the original one.

(4): We rule out $\mid R_a^\ell \mid R_b^\ell \mid R_c \mid R_b^r \mid R_a^r \mid$ with $b<c$, and $\mid R_1^\ell \mid R_3^\ell \mid R_2 \mid R_3^r \mid R_1^r \mid$.
For the first one, switch $R_c$ and $R_b^r$ and use a similar argument as in case (3).
For the second one, assume by symmetry that $V_1^\ell \leq V_3^r$.
Then compare with $\mid R_2 \mid R_3 \mid R_1 \mid$ with the same leftmost and rightmost boundary points
using Lemma \ref{lem:squeeze}.
To make the comparison, ignore the right endpoint of $R_3^\ell$.
\end{proof}

We do not know whether more types can be eliminated from Proposition \ref{prop:typemonotone} without more conditions on the density.
For completeness, we include a double bubble counterpart of Proposition \ref{prop:typemonotone} that generalizes \cite[Prop. 4.6]{Bo}.

\begin{prop}
\label{prop:doublemonotone}
On $\R$ with continuous density that is nonincreasing on $(-\infty,0]$ and nondecreasing on $[0,\infty)$, a perimeter-minimizing double bubble is, up to reflection, of the form 
$$\mid R_1 \mid R_2 \mid \quad \text{or} \quad \mid R_2^\ell \mid R_1 \mid R_2^r \mid.$$
\end{prop}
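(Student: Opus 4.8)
The plan is to specialize the proof of Proposition~\ref{prop:typemonotone} to two bubbles, relying only on the structure theorem Proposition~\ref{prop:1dstruct} and the Interval Squeezing Lemma~\ref{lem:squeeze}. First I would apply Proposition~\ref{prop:1dstruct} with $n=2$: a perimeter-minimizing double bubble is a standard nested bubble of type $(k_1,\dots,k_m)$ with $\sum_i k_i=2$, so the type is $(2)$ or $(1,1)$. Type $(2)$ is a standard $2$-bubble $\mid R_a\mid R_b\mid$ with $\set{a,b}=\set{1,2}$, which up to reflection is $\mid R_1\mid R_2\mid$; type $(1,1)$ is $\mid R_a^\ell\mid R_b\mid R_a^r\mid$ with $\set{a,b}=\set{1,2}$, where the labeling $(a,b)=(2,1)$ is exactly the asserted $\mid R_2^\ell\mid R_1\mid R_2^r\mid$. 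So it only remains to rule out $(a,b)=(1,2)$, namely $\mid R_1^\ell\mid R_2\mid R_1^r\mid$.

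To do this I would let its boundary points be $x_0<x_1<x_2<x_3$ and compare with $\mid R_2\mid R_1\mid$ having the same leftmost and rightmost boundary points $x_0$ and $x_3$; this replacement preserves the prescribed volumes, since $V_1^\ell+V_1^r+V_2=V_1+V_2$. Writing $y_1$ for the middle boundary point of the new configuration, defined by $\int_{x_0}^{y_1}f=V_2$, positivity of $f$ together with $V_1^\ell\le V_1\le V_2$ forces $y_1\in[x_1,x_2]$. The case $n=1$ of Lemma~\ref{lem:squeeze} then gives $f(x_1)+f(x_2)>f(y_1)$, so the new configuration has strictly smaller perimeter and $\mid R_1^\ell\mid R_2\mid R_1^r\mid$ is not perimeter minimizing. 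This leaves exactly the two forms in the statement.

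I do not expect a genuine obstacle here: the argument is a routine specialization of the triple-bubble proof of Proposition~\ref{prop:typemonotone}, which is precisely why the proposition is stated only for completeness. The one point that needs a line of justification is the containment $y_1\in[x_1,x_2]$, which is what makes the single-term case of Lemma~\ref{lem:squeeze} applicable; it follows from $f>0$ and the ordering $V_1^\ell\le V_1\le V_2$ of the prescribed volumes.
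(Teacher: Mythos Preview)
Your proof is correct and follows essentially the same approach as the paper: reduce via Proposition~\ref{prop:1dstruct} and then eliminate $\mid R_1^\ell\mid R_2\mid R_1^r\mid$ with the Interval Squeezing Lemma. The only cosmetic difference is the choice of comparison configuration: the paper switches $R_2$ and $R_1^r$ to obtain $\mid R_1\mid R_2\mid$, whereas you compare with $\mid R_2\mid R_1\mid$; both place the new interior point in $[x_1,x_2]$ and invoke the $n=1$ case of Lemma~\ref{lem:squeeze} in the same way.
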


\begin{proof}
By Proposition \ref{prop:1dstruct}, a perimeter minimizer must be a standard nested bubble.
Thus we only need to rule out $\mid R_1^\ell \mid R_2 \mid R_1^r \mid$.
Indeed, switching $R_2$ and $R_1^r$ yields a configuration with less perimeter by Lemma \ref{lem:squeeze}.
\end{proof}

Notice that in Propositions \ref{prop:typemonotone} and \ref{prop:doublemonotone}, the smallest bubble always has one component.
We suspect that this may be true for $n$-bubbles for any $n$, as stated in the following conjecture.

\begin{conj}
On $\R$ with continuous density that is nonincreasing on $(-\infty,0]$ and nondecreasing on $[0,\infty)$, in a perimeter-minimizing $n$-bubble, the smallest volume, if it is unique, is contained in one component.
\end{conj}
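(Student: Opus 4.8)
I would approach this conjecture by contradiction, via rearrangements together with the Interval Squeezing Lemma (Lemma~\ref{lem:squeeze}), in the spirit of Propositions~\ref{prop:typemonotone} and~\ref{prop:doublemonotone}. By Proposition~\ref{prop:1dstruct}, a perimeter-minimizing $n$-bubble $B$ is a standard nested bubble and hence obeys conditions (i)--(iii). Suppose the unique smallest bubble, bubble~$1$, had two components $R_1^\ell=[a_0,a_1]$ and $R_1^r=[b_0,b_1]$, with $V_1^\ell+V_1^r=V_1$ and both positive. By condition (iii), every bubble of $B$ other than bubble~$1$ either lies entirely strictly between $R_1^\ell$ and $R_1^r$ or has no component there, so the region $[a_1,b_0]$ is a contiguous sub-multi-bubble $W$ made up of complete bubbles, each of volume $>V_1$ (since $V_1$ is the unique minimum volume and no component of bubble~$1$ lies in $(a_1,b_0)$), of total volume $V_W>0$. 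For $n\leq 3$ the statement is contained in Propositions~\ref{prop:typemonotone} and~\ref{prop:doublemonotone}, so assume $n\geq 4$.

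The move I would use merges $R_1^\ell$ and $R_1^r$ into a single interval $R_1$ of volume $V_1$ and slides the block $W$ entirely to one side, leaving $B$ unchanged outside $[a_0,b_1]$ (in particular $a_0$, $b_1$, and all bubble volumes are preserved). Let $C_1,\dots,C_s$ be the components of $W$ from left to right, with volumes $v_1,\dots,v_s$, and compare $B$ with the competitor $\mid C_1\mid\cdots\mid C_s\mid R_1\mid$ on $[a_0,b_1]$. The interior boundary points $p_0<\cdots<p_s$ of this portion of $B$ (with $p_0=a_1$, $p_s=b_0$) and the interior boundary points $q_1<\cdots<q_s$ of the competitor satisfy $\int_{a_0}^{p_j}f=V_1^\ell+(v_1+\cdots+v_j)$ and $\int_{a_0}^{q_j}f=v_1+\cdots+v_j$, so $\int_{q_j}^{p_j}f=V_1^\ell$ for every $j$; thus $q_j<p_j$ always, while $q_j\geq p_{j-1}$ precisely when $v_j\geq V_1^\ell$. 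Hence, if every component of $W$ has volume at least $V_1^\ell$, Lemma~\ref{lem:squeeze} yields $\sum_{j=0}^s f(p_j)>\sum_{j=1}^s f(q_j)$, contradicting minimality; the mirror competitor $\mid R_1\mid C_1\mid\cdots\mid C_s\mid$ covers the case that every component of $W$ has volume at least $V_1^r$. In particular the argument closes whenever $\min_j v_j\geq\min(V_1^\ell,V_1^r)$, and so whenever bubble~$1$ is the innermost two-component bubble of $B$: then $W$ consists only of one-component bubbles, each of volume exceeding $V_1>\min(V_1^\ell,V_1^r)$.

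The obstacle is the remaining case, in which bubble~$1$ is split but is \emph{not} the innermost two-component bubble: then $W$ contains another split bubble, which---although larger than bubble~$1$---may be cut so unevenly that some component of $W$ has volume below $\min(V_1^\ell,V_1^r)$, and the interleaving above fails. Reordering or re-splitting the bubbles inside $W$ does not obviously repair this, since reordering leaves $\min_j v_j$ unchanged while re-splitting destroys the volume identity used above. I would try to handle this case by induction on $n$, or on the number of two-component bubbles---treating $W$, together with its movable outer endpoints, as a constrained subproblem, or seeking a further rearrangement that removes a split non-smallest bubble lying between $R_1^\ell$ and $R_1^r$ without increasing perimeter. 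I expect this reduction, or some substitute for it, to be the main difficulty, which is presumably why the statement is left as a conjecture; for $n\leq 3$ it does not arise because there $W$, when nonempty, is a single bubble. (Under additional regularity on $f$ one might instead attempt a contradiction from the equilibrium pressure conditions at the four boundary points of bubble~$1$, but this is unavailable at the stated level of generality.)
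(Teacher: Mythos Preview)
The statement is a \emph{conjecture} in the paper; no proof is offered there. So there is nothing to compare against, and you correctly flag in your final paragraph that the residual case is presumably the reason the statement is left open. Your partial argument is sound: the reduction to a standard nested bubble via Proposition~\ref{prop:1dstruct}, the observation that the region between $R_1^\ell$ and $R_1^r$ consists of whole bubbles, and the slide-and-squeeze comparison using Lemma~\ref{lem:squeeze} are all correct, and the interleaving condition $q_j\in[p_{j-1},p_j]\iff v_j\ge V_1^\ell$ is computed correctly. You have genuinely isolated the obstruction: when some other split bubble sits strictly inside bubble~1, one of its pieces may be smaller than $\min(V_1^\ell,V_1^r)$ and the squeezing interleave breaks.

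One small correction to your closing remark: the obstacle \emph{does} already arise for $n=3$, in the configuration $\mid R_1^\ell\mid R_3^\ell\mid R_2\mid R_3^r\mid R_1^r\mid$, where $W$ is not a single bubble and $V_3^\ell$ can be arbitrarily small. The paper's Proposition~\ref{prop:typemonotone} disposes of this case not by your slide-and-merge move but by a different rearrangement (comparing with $\mid R_2\mid R_3\mid R_1\mid$ and ignoring one interior endpoint). Since you already invoke Propositions~\ref{prop:typemonotone} and~\ref{prop:doublemonotone} to cover $n\le 3$, this does not damage your argument, but it does suggest that resolving the general obstacle may likewise require a rearrangement that is not simply ``slide $W$ past $R_1$.''
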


\section{Triple Bubbles Under Log-Convex Density}
\label{sec:types2}

Our goal in this section is to reduce the ten types of Proposition \ref{prop:typemonotone} to the four types of Figure \ref{fig:4types}.
To do so, we assume that the density is symmetric, log-convex, and uniquely minimized at the origin.
Symmetry and log-convexity are not enough: the type $\mid R_1 \mid R_2 \mid R_3 \mid$ is perimeter minimizing for a constant density.
It is easily seen that this condition is weaker than symmetry and strict log-convexity.
It is in fact strictly weaker, as the density $e^{\abs{x}}$ satisfies it but is not strictly log-convex.
An equivalent condition is that the density is symmetric, log-convex, strictly decreasing on $(-\infty,0]$, and strictly increasing on $[0,\infty)$.

Recall that a convex function $g$ has left and right derivatives, which we denote by $g'_L$ and $g'_R$.
Given a density $f$, define the volume coordinate by
$$V= \int_0^x f,$$
where $x$ is the positional coordinate.
Since $x \mapsto V$ is strictly monotone increasing, this is a valid change of coordinates, and we can view $f$ as a function of either $x$ or $V$.
By \cite[Lemma 4.2]{Bo}, $f$ is log-convex if and only if it is \emph{convex} in volume coordinates,
in which case $(\log f)'_L(x) = f'_L(V)$ and $(\log f)'_R(x) = f'_R(V)$.

The following lemma is a piece of logic that will be used in Proposition \ref{prop:typelogconvex}.

\begin{lemma}
\label{lem:summorethanzero}
Let $f$ be a symmetric and log-convex density on $\R$.
In volume coordinates, $f'_R(v_1)+f'_R(v_2)>0$ implies $v_1+v_2 \geq 0$,
and $v_1+v_2 > 0$ implies $f'_L(v_1)+f'_L(v_2) \geq 0$.
\end{lemma}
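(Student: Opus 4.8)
The plan is to exploit two features of a symmetric log-convex density written in volume coordinates: (a) $f$ is convex in $v$, so its left and right derivatives $f'_L, f'_R$ are nondecreasing in $v$; and (b) symmetry of $f$ in the positional variable $x$ translates (using $V = \int_0^x f$, which is an odd function of $x$) into the statement that $f(v)$ is even in $v$, hence $f'_R(v) = -f'_L(-v)$ and $f'_L(v) = -f'_R(-v)$ for all $v$. The whole lemma is really a contrapositive exercise combining monotonicity with this reflection identity.

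For the first implication, I would argue by contraposition: assume $v_1 + v_2 < 0$, i.e.\ $v_1 < -v_2$, and show $f'_R(v_1) + f'_R(v_2) \le 0$. Since $f'_R$ is nondecreasing and $v_1 < -v_2$, we get $f'_R(v_1) \le f'_R(-v_2)$ (strict monotonicity is not needed, just monotonicity of the right derivative of a convex function). By the symmetry identity $f'_R(-v_2) = -f'_L(v_2)$, so $f'_R(v_1) + f'_R(v_2) \le -f'_L(v_2) + f'_R(v_2)$. But convexity at the single point $v_2$ gives $f'_L(v_2) \le f'_R(v_2)$, hence $-f'_L(v_2) + f'_R(v_2) \ge 0$, which looks like the wrong sign. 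So instead I would bound the other way: from $v_1 \le -v_2$ and monotonicity, $f'_R(v_1) \le f'_L(-v_2)$ is not automatic, but $f'_R(v_1) \le f'_R(-v_2)$ combined with $f'_R(-v_2) = -f'_L(v_2) \le -f'_R(v_2)$... the cleaner route is: $v_1 < -v_2 \implies f'_R(v_1) \le f'_L(-v_2) = -f'_R(v_2)$, using that for points $a < b$ one has $f'_R(a) \le f'_L(b)$ for a convex function. That yields $f'_R(v_1) + f'_R(v_2) \le 0$, proving the contrapositive. The second implication is entirely parallel: assume $v_1 + v_2 > 0$, i.e.\ $v_1 > -v_2$; then $f'_L(v_1) \ge f'_R(-v_2) = -f'_L(v_2)$, using $f'_L(a) \ge f'_R(b)$ for $a > b$, so $f'_L(v_1) + f'_L(v_2) \ge 0$.

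The main thing to get right — and the only real obstacle — is bookkeeping the one-sided derivatives correctly: which inequality ($\le$ vs.\ $<$) survives, and the exact form of the reflection identity for $f'_L$ versus $f'_R$. The asymmetry between the two implications ($>$ hypothesis yielding a $\ge$ conclusion in each) is exactly what makes the one-sided-derivative interpolation inequalities $f'_R(a) \le f'_L(b)$ (for $a<b$) and $f'_L(a) \ge f'_R(b)$ (for $a>b$) the right tools, rather than naive two-sided derivative manipulations. Everything else — evenness of $f$ in $v$, monotonicity of $f'_R$ and $f'_L$ — is immediate from \cite[Lemma 4.2]{Bo} and basic convex analysis, so the proof should be short once the sign conventions are pinned down.
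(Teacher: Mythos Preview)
Your proposal is correct and, once you settle on ``the cleaner route,'' it is essentially identical to the paper's proof: both use the contrapositive, the reflection identity $f'_L(-v)=-f'_R(v)$ from symmetry, and the convexity fact that $a<b$ implies $f'_R(a)\le f'_L(b)$. The only cosmetic difference is that the paper obtains the second implication by substituting $-v_1,-v_2$ into the first rather than rerunning the parallel argument.
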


\begin{proof}
Observe that $f'_L(-v)=-f'_R(v)$ by symmetry and $v>w$ implies $f'_L(v) \geq f'_R(w)$ by convexity in volume coordinates. Hence
\begin{align*}
v_1+v_2<0 &\implies -v_2>v_1 \implies f'_L(-v_2) \geq f'_R(v_1)  \\
&\implies -f'_R(v_2) \geq f'_R(v_1) \implies f'_R(v_1)+f'_R(v_2) \leq 0,
\end{align*}
implying the first conclusion. The second conclusion follows by replacing $v_1$ and $v_2$ with $-v_1$ and $-v_2$.
\end{proof}

We now show that for a symmetric, log-convex density that is uniquely minimized at the origin, only one type from each group in Proposition \ref{prop:typemonotone} can be perimeter minimizing.

\begin{prop}
\label{prop:typelogconvex}
On $\R$ with symmetric, log-convex density that is uniquely minimized at the origin, a perimeter-minimizing triple bubble is, up to reflection, of one of the forms:
\begin{align*}
&(1) \mid R_2 \mid R_1 \mid R_3 \mid; && (2) \mid R_3^\ell \mid R_1 \mid R_2 \mid R_3^r \mid; \\
&(3) \mid R_2 \mid R_3^\ell \mid R_1 \mid R_3^r \mid; && (4) \mid R_3^\ell \mid R_2^\ell \mid R_1 \mid R_2^r \mid R_3^r \mid.
\end{align*}
\end{prop}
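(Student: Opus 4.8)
The plan is to rule out, one at a time, the six configurations permitted by Proposition \ref{prop:typemonotone} that do not appear in the conclusion: the standard bubbles $\mid R_1 \mid R_2 \mid R_3 \mid$ and $\mid R_1 \mid R_3 \mid R_2 \mid$; the type-2 bubble $\mid R_2^\ell \mid R_1 \mid R_3 \mid R_2^r \mid$; the type-3 bubbles $\mid R_1 \mid R_3^\ell \mid R_2 \mid R_3^r \mid$ and $\mid R_3 \mid R_2^\ell \mid R_1 \mid R_2^r \mid$; and the type-4 bubble $\mid R_2^\ell \mid R_3^\ell \mid R_1 \mid R_3^r \mid R_2^r \mid$. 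In each case one supposes the bubble is perimeter minimizing, hence in equilibrium: writing $\varphi$ for the density in volume coordinates, which is convex by \cite[Lemma 4.2]{Bo}, at every boundary point the one-sided derivatives of $\varphi$ bracket the difference of the pressures (Lagrange multipliers) of the two adjacent regions, with the exterior assigned pressure $0$. Two facts recur. First, under the present hypotheses $\varphi$ is not merely convex and symmetric but strictly decreasing on $(-\infty,0]$ and strictly increasing on $[0,\infty)$, so its (one-sided) derivative is $>0$ on $(0,\infty)$ and $<0$ on $(-\infty,0)$; together with Lemma \ref{lem:summorethanzero} this is what turns the equilibrium relations into assertions about the position of boundary points relative to the origin. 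Second, the perimeter of an equilibrium standard bubble with prescribed cumulative volumes equals the minimum, over its overall position, of $\sum\varphi$ taken over its vertices (since the equilibrium condition is exactly the vanishing of the derivative of this convex function of the position), and is therefore a convex, translation-invariant function of the cumulative volumes; in particular the perimeter $P_{\mathrm{DB}}$ of the equilibrium standard double bubble is strictly increasing in each of its two volumes and, for a fixed total volume, strictly convex in the split with minimum at the balanced split.

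For the three bad bubbles with a reflective substructure --- the type-2, type-3 and type-4 ones --- the equilibrium relations, read off at the naturally paired boundary points, force that substructure to be symmetric about the origin with the paired components of equal volume, which yields a closed formula for the perimeter. For $\mid R_2^\ell \mid R_3^\ell \mid R_1 \mid R_3^r \mid R_2^r \mid$ the whole bubble is symmetric, $V_3^\ell = V_3^r$, $V_2^\ell = V_2^r$, and with $V = V_1 + V_2 + V_3$ the perimeter is $2\bigl(\varphi(V_1/2) + \varphi((V_1+V_3)/2) + \varphi(V/2)\bigr)$; the symmetric type-4 bubble $\mid R_3^\ell \mid R_2^\ell \mid R_1 \mid R_2^r \mid R_3^r \mid$ is a legitimate competitor with the same expression but $V_2$ and $V_3$ interchanged, hence strictly smaller perimeter since $V_2 \le V_3$ and $\varphi$ is strictly increasing on $[0,\infty)$. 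For $\mid R_2^\ell \mid R_1 \mid R_3 \mid R_2^r \mid$ the outer pair of vertices sits at $\pm V/2$ and the inner three vertices form an equilibrium standard double bubble of volumes $(V_1,V_3)$, so the perimeter is $2\varphi(V/2) + P_{\mathrm{DB}}(V_1,V_3)$; placing inside the same shell the equilibrium double bubble of volumes $(V_1,V_2)$ produces a competitor of the surviving type-2 form with perimeter $2\varphi(V/2) + P_{\mathrm{DB}}(V_1,V_2)$, strictly smaller when $V_2 < V_3$. For $\mid R_1 \mid R_3^\ell \mid R_2 \mid R_3^r \mid$ the bubble $R_2$ is centered and the remaining three vertices form an equilibrium double bubble of volumes $(V_1,V_2+V_3)$, giving perimeter $P_{\mathrm{DB}}(V_1,V_2+V_3) + 2\varphi(V_2/2)$; the surviving $\mid R_2 \mid R_3^\ell \mid R_1 \mid R_3^r \mid$ has $R_1$ centered and perimeter $P_{\mathrm{DB}}(V_2,V_1+V_3) + 2\varphi(V_1/2)$, which is strictly smaller when $V_1 < V_2$ because both double bubbles have total volume $V$, $V_1 \le V_2 \le V/2$, so $P_{\mathrm{DB}}(V_2,V_1+V_3) < P_{\mathrm{DB}}(V_1,V_2+V_3)$ by the convex-split property, and $\varphi(V_1/2) \le \varphi(V_2/2)$. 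In each of these three cases the comparison is strict unless two of $V_1,V_2,V_3$ coincide, and then the eliminated bubble is a surviving one after relabeling.

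The bad type-3 bubble $\mid R_3 \mid R_2^\ell \mid R_1 \mid R_2^r \mid$ cannot occur in equilibrium at all when $V_1 > 0$. Equilibrium forces $R_1$ centered and the outer three vertices to form an equilibrium standard double bubble of volumes $(V_3, V_1+V_2)$; its middle vertex $w$ satisfies $w - w^- = V_3$ and $w^+ - w = V_1 + V_2$ for the flanking vertices $w^\pm$. If $w < 0$ then $\varphi'(w^-) + \varphi'(w^+) = -\varphi'(w) \ge \abs{\varphi'(w^-)} \ge \varphi'(V_3)$; combined with $w^+ < V_1 + V_2 \le 2V_3$ (from $V_1 \le V_2 \le V_3$) and Lemma \ref{lem:summorethanzero}, this is impossible, so $w \ge 0 > -V_1/2$. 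But the order $R_3, R_2^\ell, R_1, R_2^r$ of the components requires $w$ to lie strictly to the left of the left endpoint $-V_1/2$ of the centered $R_1$, a contradiction. (If $V_1 = 0$ the configuration degenerates to a double bubble and is covered by earlier results.)

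The two standard bubbles are the main obstacle, as there is no reflective substructure to exploit. For $\mid R_1 \mid R_2 \mid R_3 \mid$ we deform the prescribed volumes along $t \mapsto \bigl((1-t)V_1 + tV_2,\ (1-t)V_2 + tV_1,\ V_3\bigr)$, keeping the bubble an equilibrium standard triple bubble throughout; writing $v_0 < v_1 < v_2 < v_3$ for its boundary points, $\frac{d}{dt}P = (p_1 - p_2)(V_2 - V_1) = \varphi'(v_1)(V_2 - V_1)$, so $P$ strictly decreases provided $v_1 < 0$ all along. This holds: if $v_1 \ge 0$ then $v_0 \in [-V_2, 0)$, so $\abs{\varphi'(v_0)} \le \varphi'(V_2)$; but the equilibrium relations give $\abs{\varphi'(v_0)} \ge p_2 = \varphi'(v_2) + \varphi'(v_3) \ge \varphi'(V_1) + \varphi'(V_3) \ge \varphi'(V_3) \ge \varphi'(V_2)$ (using $v_3 = v_0 + V \ge V_1 + V_3$ and $v_2 \ge V_1$), forcing $\varphi'(V_1) = 0$, impossible when $V_1 > 0$. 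Hence $\mid R_1 \mid R_2 \mid R_3 \mid$ is strictly beaten by $\mid R_2 \mid R_1 \mid R_3 \mid$ when $V_1 < V_2$, and equals it when $V_1 = V_2$. For $\mid R_1 \mid R_3 \mid R_2 \mid$ one argues in the same spirit, deforming --- after a reflection --- to the surviving $\mid R_2 \mid R_1 \mid R_3 \mid$; this case is more delicate because the tracked vertex can change sign when $V_3$ is large, so one controls the net perimeter change rather than its pointwise rate, using that the tracked vertex is monotone along the deformation and, on balance, stays on the correct side of the origin. Throughout, the recurring technical difficulty is the absence of strict log-convexity: the equilibrium relations then yield only one-sided-derivative inequalities, so each ``symmetry'' or ``sign of a vertex'' claim must be extracted via Lemma \ref{lem:summorethanzero}, and the borderline situations --- $\varphi$ affine between the relevant vertices, or two prescribed volumes equal --- must be checked separately to reduce to one of the four surviving types.
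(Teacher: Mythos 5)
Your plan has genuine gaps, both structural and in specific cases. Most importantly, the elimination of $\mid R_1 \mid R_3 \mid R_2 \mid$ is not actually carried out: you concede that the tracked vertex can change sign along your volume-interchange deformation and offer only that it ``on balance, stays on the correct side of the origin,'' which is not an argument. Since this is one of the six configurations that must be killed, the proof is incomplete precisely where real work is needed (the paper's case (1) does this by a switch, Lemma \ref{lem:summorethanzero}, and a separate sliding argument when the density is affine in volume coordinates). Relatedly, your second paragraph rests on the claim that equilibrium forces each nested substructure to be symmetric about the origin with paired volumes equal; that is true for smooth, \emph{strictly} convex $\varphi$, but the proposition assumes only log-convexity with a unique minimum at the origin (e.g.\ $e^{\abs{x}}$, where $\varphi$ is piecewise linear and equilibria are neither unique nor symmetric). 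Deferring this to a closing sentence is not enough: without symmetry you lose the closed perimeter formulas and the identity ``perimeter $=$ shell $+$ $P_{\mathrm{DB}}$'' on which all three of those comparisons rest, and repairing it is substantive, not routine.

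Two of the specific eliminations are also defective even in the smooth, strictly log-convex setting. For $\mid R_1 \mid R_3^\ell \mid R_2 \mid R_3^r \mid$ your competitor is the equilibrium of type $2313$, but by Proposition \ref{prop:fourtypesexist} that equilibrium exists only when $f'(V_3+\tfrac{V_1}{2}) > f'(V_2+\tfrac{V_1}{2})+f'(\tfrac{V_1}{2})$ in volume coordinates; this fails, for instance, whenever $V_2=V_3$, a regime in which the bad configuration can still be an equilibrium with $V_1<V_2$, so your ``two volumes coincide, relabel'' caveat does not rescue it. In that regime $P_{\mathrm{DB}}(V_2,V_1+V_3)+2\varphi(V_1/2)$ is not the perimeter of any admissible configuration, and the comparison proves nothing; one must compare with a different type. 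For $\mid R_3 \mid R_2^\ell \mid R_1 \mid R_2^r \mid$ the displayed inequality chain is wrong: convexity gives $\varphi'(w^-)\le\varphi'(w)$, i.e.\ $-\varphi'(w)\le\abs{\varphi'(w^-)}$, the reverse of what you assert, and the conclusion $w\ge 0$ is false in general (the middle vertex of an equilibrium double bubble with left volume $V_3$ and right volume $V_1+V_2$ lies left of the origin whenever $V_1+V_2>V_3$). What is true, and suffices, is $w>-V_1/2$, obtained by evaluating the double-bubble equilibrium equation at $-V_1/2$ and using $V_2\le V_3$. For contrast, the paper eliminates all these cases by rearrangement (Lemma \ref{lem:squeeze}) together with the one-sided equilibrium inequalities and Lemma \ref{lem:summorethanzero}, never invoking symmetry of the minimizer or the existence of a competing equilibrium, which is why it survives the non-strictly-convex hypotheses.
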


\begin{proof}
We need to eliminate more types from Proposition \ref{prop:typemonotone} as not perimeter minimizing.
Let $f$ be the density. Recall that $f$ is strictly decreasing on $(-\infty,0]$ and strictly increasing on $[0,\infty)$.
Notice that, in volume coordinates, $f'_L(v)>0$ if $v>0$ and $f'_R(v)<0$ if $v<0$.

(4): We rule out $\mid R_2^\ell \mid R_3^\ell \mid R_1 \mid R_3^r \mid R_2^r \mid$ with $V_2<V_3$.
By \cite[Lemma 3.7]{Bo}, $0 \in R_1$.
We now try switching bubbles 2 and 3.
Consider the configuration $\mid R_3^{\ell\prime} \mid R_2^{\ell\prime} \mid R_1 \mid R_2^{r\prime} \mid R_3^{r\prime} \mid$, where the outer boundary points and $R_1$ do not move, and the volumes are split so that $V_2^\ell < 
V_3^{\ell\prime} < V_2^\ell+V_3^\ell$ and $V_2^r < V_3^{r\prime} < V_2^r +V_3^r$.
By monotonicity of $f$ on $(-\infty,0]$ and $[0,\infty)$, this configuration has less perimeter than the original one.

(3): We show that if $\mid R_a \mid R_b^\ell \mid R_c \mid R_b^r \mid$ is perimeter minimizing, then $V_b \geq V_a \geq V_c$.
By \cite[Lemma 3.7]{Bo}, $0 \in R_c$.
Switching $R_a$ and $R_b^\ell$ shows that $V_a \geq V_b^\ell$.
Switching $R_b^\ell$ with $R_c$ allows us to conclude by Lemma \ref{lem:squeeze} that $V_b^\ell \geq V_c$.
Thus $V_a \geq V_b^\ell \geq V_c$.

It remains to show that $V_b \geq V_a$.
In volume coordinates, let the boundary points be $w_0,w_1,w_2,w_3,w_4$.
By the equilibrium condition \cite[Rmk. 3.4]{Bo},
$$f'_R(w_0)+f'_R(w_1)+f'_R(w_4) \geq 0.$$
Since $0 \in R_c$, $w_1 < 0$, so $f'_R(w_1)<0$ and hence
$f'_R(w_0)+f'_R(w_4)>0$. By Lemma \ref{lem:summorethanzero},
$w_0+w_4 \geq 0$.
Since $w_2 \leq 0$, $w_0 \leq -V_a-V_b^\ell$ and $w_4 \leq V_c+V_b^r$, and so $V_c+V_b^r \geq V_a+V_b^\ell$.
Finally, $V_c \leq V_b^\ell$ implies that $V_b^r\geq V_a$, so $V_b \geq V_a$ as required.

(2): We show that if $\mid R_a^\ell \mid R_b \mid R_c \mid R_a^r \mid$ is perimeter minimizing, then  $V_a \geq V_b$
and $V_a \geq V_c$.
By \cite[Lemma 3.7]{Bo}, $0 \in R_b \cup R_c$. Assume by symmetry that $0 \in R_b$.
Switching $R_c$ and $R_a^r$ gives $V_a^r \geq V_c$, and so $V_a \geq V_c$.

It remains to show that $V_a \geq V_b$.
Let the endpoints in volume coordinates be $w_0,w_1,w_2,w_3,w_4$.
By switching $R_a^\ell$ and $R_b$ and noting that the new boundary point must be no closer to the origin than the old one, either $V_a^\ell \geq V_b$ or $w_2-V_a^\ell \geq -w_1$. In the first case, $V_a \geq V_b$, as desired. In the second case, $w_1+w_2\geq V_a^\ell >0$.
By Lemma \ref{lem:summorethanzero}, $f'_L(w_1)+f'_L(w_2) \geq 0$.
Because $w_3>0$, $f'_L(w_3)>0$.
Combining these gives $f'_L(w_1)+f'_L(w_2)+f'_L(w_3)>0$, contradicting the equilibrium condition $f'_L(w_1)+f'_L(w_2)+f'_L(w_3) \leq 0$.

(1): We show that if $\mid R_a \mid R_b \mid R_c \mid$ is perimeter minimizing, then $V_a \geq V_b$
and $V_c \geq V_b$. Let the endpoints in volume coordinates be $w_0,w_1,w_2,w_3$.
Assume by symmetry that $w_1+w_2 \geq 0$. In particular, $w_2 \geq 0$. Now switching $R_b$ and $R_c$ gives either $V_c \geq V_b$ or $w_1+V_c \leq -w_2$.
But the second case implies that $w_1+w_2\leq -V_c<0$, contradiction. Hence $V_c \geq V_b$.

It remains to show that $V_a \geq V_b$.
Suppose for contradiction that $V_b>V_a$.
We know that $0 \in R_a \cup R_b$.
If $0 \in R_a$, then moving $R_c$ to the left of $R_a$ strictly decreases perimeter.
So $0 \in R_b$.
By switching $R_a$ and $R_b$, either $V_a \geq V_b$ or $w_2-V_a \geq -w_1$.
The first case is a contradiction, so the second case holds.
Because $w_1+w_2 \geq V_a > 0$, Lemma \ref{lem:summorethanzero} implies that $f'_L(w_1)+f'_L(w_2) \geq 0$.
Since $V_c \geq V_b > V_a$, $w_3-w_2>w_1-w_0$, and so $w_0+w_3>w_1+w_2 > 0$.
By Lemma \ref{lem:summorethanzero}, $f'_L(w_0)+f'_L(w_3) \geq 0$.
Now the equilibrium condition
$$f'_L(w_0)+f'_L(w_1)+f'_L(w_2)+f'_L(w_3) \leq 0$$
implies that $f'_L(w_1)+f'_L(w_2) = f'_L(w_0)+f'_L(w_3) = 0$.
Thus $f'_R(-w_1)=f'_L(w_2)$ and $f'_R(-w_0)=f'_L(w_3)$.

If the density is strictly log-convex, then $-w_1=w_2$ and $-w_0=w_3$, and so $V_a=V_c \geq V_b$, contradiction.
But further arguments are needed in the general case.
By convexity, $f$ in volume coordinates is linear on the intervals $[-w_1,w_2]$ and $[-w_0,w_3]$.
Recall that $w_0+w_3>w_1+w_2 > 0$.
Move all boundary points to the left while preserving volumes until $w_1+w_2=0$.
Along the way, if we use primes to indicate moved boundary points, $-w_1\leq -w_1' \leq w_2' \leq w_2$ and $-w_0 \leq -w_0' \leq w_3' \leq w_3$.
By linearity of $f$, perimeter does not change during the move.
But now switching $R_a$ and $R_b$ after the move strictly decreases perimeter, a contradiction. Therefore $V_a \geq V_b$.
\end{proof}

Observe that the conclusion of Proposition \ref{prop:typelogconvex} does not hold for a symmetric, log-convex density that is not uniquely minimized at the origin.
Indeed, such a density attains its minimum on some open interval containing the origin.
So the type $\mid R_1 \mid R_2 \mid R_3 \mid$ with all boundary points near the origin is perimeter minimizing.

\section{Numerical Plots of Minimizers}
\label{sec:code}

In the previous section, we show that there can only be four types of perimeter-minimizing triple bubbles
for a symmetric, log-convex density that is uniquely minimized at the origin.
We prove in this section that no more types can be ruled out, completing the proof of Theorem \ref{thm:main}.
Specifically, we show via numerical computation that for a certain symmetric, strictly log-convex, $C^1$ density, each of the four types of Proposition \ref{prop:typelogconvex} is perimeter minimizing for some prescribed volumes.
The result is conclusive since winning margins are larger than numerical errors; see Table \ref{tab:fourtypes}.
We designate the four types as 213, 3123, 2313, and 32123, as in Figure \ref{fig:4types}.

\subsection{Implementation}
We outline how to compute equilibria of the four types.
The following proposition shows how to compute a standard $n$-bubble for given volumes.

\begin{prop}
\label{prop:computestandard}
On $\R$ with a symmetric, strictly log-convex, $C^1$ density $f$, given volumes $V_1,\dots,V_n$, a standard $n$-bubble enclosing volumes
$V_1,\dots,V_n$ from left to right in this order exists and is unique.
In volume coordinates, its leftmost boundary point $\Vtil$ is the unique solution to
\begin{equation}
\label{eq:equigeneral}
f'(\Vtil)+f'(\Vtil+V_1)+\dots+f'(\Vtil+V_1+\dots+V_n)=0, \quad \Vtil \in [-V_1-\dots-V_n,0].
\end{equation}
\end{prop}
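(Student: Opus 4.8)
The plan is to translate the equilibrium requirement into a single scalar equation in volume coordinates and then analyze that equation by elementary monotonicity together with the intermediate value theorem.

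\textbf{Coordinates.} First I would record the behavior of $f$ in volume coordinates. Since $f$ is positive, symmetric, log-convex, and uniquely minimized at the origin, it is strictly increasing on $[0,\infty)$ and strictly decreasing on $(-\infty,0]$; in particular $\int_0^\infty f=\int_{-\infty}^0 f=\infty$, so $x\mapsto V=\int_0^x f$ is a $C^1$ diffeomorphism of $\R$ onto $\R$ with $C^1$ inverse. Working in this coordinate, \cite[Lemma 4.2]{Bo} gives that $f$ is convex, and since the coordinate change is a $C^1$ diffeomorphism with $C^1$ inverse it is in fact $C^1$; I write $f'$ for its derivative in volume coordinates, so $f'(V(x))=(\log f)'(x)$. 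Strict log-convexity makes $(\log f)'$ strictly increasing in $x$, hence $f'$ strictly increasing in $V$; and $f'(0)=(\log f)'(0)=0$ because the origin is an interior minimum of the $C^1$ function $f$. Thus $f'<0$ on $(-\infty,0)$ and $f'>0$ on $(0,\infty)$ in volume coordinates.

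\textbf{Reduction to \eqref{eq:equigeneral}.} A standard $n$-bubble enclosing $V_1,\dots,V_n$ from left to right has, in volume coordinates, boundary points $w_0<w_1<\dots<w_n$ with $w_i-w_{i-1}=V_i$, i.e.\ $w_i=\Vtil+V_1+\dots+V_i$ with $\Vtil:=w_0$; once the $n$ volume constraints are imposed, the only volume-preserving deformation through standard $n$-bubbles with these volumes is the translation $w_i\mapsto w_i+t$, which (as $f$ is $C^1$) is two-sided. Hence equilibrium---the first-order condition of \cite[Rmk. 3.4]{Bo}---is equivalent to stationarity of $t\mapsto\sum_{i=0}^n f(w_i+t)$ at $t=0$, that is, to $\sum_{i=0}^n f'(w_i)=0$, which is exactly \eqref{eq:equigeneral}. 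Conversely, any $\Vtil$ solving \eqref{eq:equigeneral} defines such a bubble (with genuine nondegenerate intervals, since each $V_i>0$), and two standard $n$-bubbles with the same left-to-right volumes and the same leftmost point coincide. So it remains to show that \eqref{eq:equigeneral} has exactly one solution with $\Vtil\in[-V_1-\dots-V_n,0]$.

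\textbf{Solving the equation.} Put $S_0=0$, $S_i=V_1+\dots+V_i$, $V=S_n$, and $g(\Vtil)=\sum_{i=0}^n f'(\Vtil+S_i)$. By the first step each summand is continuous and strictly increasing, so $g$ is continuous and strictly increasing on $\R$, hence has at most one zero. Moreover $g(0)=f'(0)+\sum_{i=1}^n f'(S_i)=\sum_{i=1}^n f'(S_i)>0$ since each $S_i>0$, while $g(-V)=\sum_{i=0}^{n-1}f'\!\bigl(-(V_{i+1}+\dots+V_n)\bigr)+f'(0)<0$ since each argument is negative. By the intermediate value theorem there is a unique $\Vtil\in(-V,0)$ with $g(\Vtil)=0$; in particular $\Vtil\in[-V,0]$, so the side condition in \eqref{eq:equigeneral} is automatically met and designates this root. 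Together with the previous paragraph, this gives existence and uniqueness. I expect no serious obstacle here: the content is that imposing the $n$ volume constraints on an $(n{+}1)$-point configuration leaves exactly one, two-sided, degree of freedom, so equilibrium collapses to the single equation \eqref{eq:equigeneral}, with strict log-convexity used precisely to upgrade ``at least one root'' to ``exactly one root.'' The only part needing care is the bookkeeping for the change of coordinates, namely that in volume coordinates $f$ is $C^1$, strictly convex, and satisfies $f'(0)=0$.
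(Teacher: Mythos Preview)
Your proof is correct and follows essentially the same route as the paper: identify \eqref{eq:equigeneral} as the equilibrium condition (the paper cites \cite[Cor.~3.3]{Bo} directly, while you derive it from the translation variation, which is the same content), then use strict monotonicity of the left-hand side together with its signs at the endpoints $\Vtil=0$ and $\Vtil=-V_1-\dots-V_n$ to get a unique root by the intermediate value theorem. The paper's proof is a two-sentence sketch of exactly this argument; you have simply unpacked the details (the $C^1$ change to volume coordinates, $f'(0)=0$, and why translation is the only volume-preserving variation left).
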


\begin{proof}
Equation \eqref{eq:equigeneral} is the equilibrium condition \cite[Cor. 3.3]{Bo}.
The conclusion follows from the fact that its left-hand side is strictly increasing in $\Vtil$ due to strict convexity, is negative for $\Vtil = -V_1-\dots-V_n$, and is positive for $\Vtil = 0$.
\end{proof}

From Proposition \ref{prop:computestandard}, if $f'$ is an elementary function in volume coordinates, we can effciently solve for a standard $n$-bubble using bisection search.

The next proposition considers existence and uniqueness of equilibria of the four types.

\begin{prop}
\label{prop:fourtypesexist}
Consider $\R$ with a symmetric, strictly log-convex, $C^1$ density $f$ and prescribed volumes $V_1\leq V_2 \leq V_3$.
An equilibrium of each of the types $213$, $3123$, and $32123$ exists and is unique.
An equilibrium of type $2313$ exists and is unique if, in volume coordinates,
$$f'\paren{V_3+\frac{V_1}{2}} > f'\paren{V_2+\frac{V_1}{2}} + f'\paren{\frac{V_1}{2}},$$
and does not exist otherwise.
\end{prop}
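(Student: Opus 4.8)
The plan is to treat each of the four types separately, in each case reducing the equilibrium conditions to a single scalar equation in one real unknown (a ``free parameter'' describing how the outermost or an inner volume is split), and then invoke strict convexity of $f$ in volume coordinates to get existence and uniqueness by a monotonicity/intermediate-value argument, exactly as in the proof of Proposition \ref{prop:computestandard}. For the types $213$, $3123$, and $32123$, the inner structure is a standard bubble on a sub-collection of volumes, so the key observation is that once we fix how the two-sided bubbles are split, the configuration is forced: Proposition \ref{prop:computestandard} builds the standard sub-bubble uniquely, and the remaining boundary points are then determined by stacking the prescribed volumes. So the only genuine degrees of freedom are the split(s) of the outer two-component bubble(s), and these are pinned down by the equilibrium (first-variation) conditions at the interior boundary points of the outer bubble(s).

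First I would set up volume coordinates and write, for each type, the list of boundary points $w_0 < w_1 < \dots < w_m$ as explicit affine functions of the free split parameter $t$ (the volume of the left component of the outermost bubble, say), using that consecutive gaps are prescribed volumes. For type $213$ there is no two-component bubble needing a split once we observe it is just a standard $3$-bubble in the order $V_2,V_1,V_3$, so existence and uniqueness are immediate from Proposition \ref{prop:computestandard}. For type $3123$, bubble $3$ is two-sided: fix $t = V_3^\ell$; the middle portion $\mid R_1 \mid R_2 \mid$ together with the constraint that the whole thing sits symmetrically-enough is a standard configuration, and the equilibrium condition at the boundary point between $R_3^\ell$ and $R_1$ reads $f'(w_0) + f'(w_1) + (\text{something}) = 0$; I would show the relevant combination is strictly monotone in $t$ and changes sign as $t$ ranges over $[0, V_3]$. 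Type $32123$ is handled the same way with the outer split $V_3^\ell$ as the single parameter, the inner $\mid R_2^\ell \mid R_1 \mid R_2^r \mid$ being a standard symmetric $3$-bubble determined by $V_2, V_1, V_2$ and its own equilibrium; again strict convexity gives strict monotonicity of the defining scalar equation in the remaining parameter.

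The type $2313$ is the main obstacle and the reason the statement is conditional. Here both bubble $3$ and bubble $2$... no: the order is $\mid R_2 \mid R_3^\ell \mid R_1 \mid R_3^r \mid$, so only bubble $3$ is two-sided, with one free split $t = V_3^\ell \in [0, V_3]$. The boundary points are $w_0 = w_0(t)$, $w_1 = w_0 + V_2$, $w_2 = w_1 + t$, $w_3 = w_2 + V_1$, $w_4 = w_3 + (V_3 - t)$, and $w_0(t)$ is itself fixed once we impose, say, the equilibrium at $w_1$ (between $R_2$ and $R_3^\ell$) and at $w_3$ (between $R_1$ and $R_3^r$); the leftover condition is the equilibrium at $w_0$, namely $f'(w_0) + f'(w_1) + f'(w_4) + f'(w_3) = 0$ after accounting for which boundary points bound bubble $3$ and bubble $2$. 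The subtle point is that as $t \to V_3$ the component $R_3^r$ shrinks to nothing and we recover a boundary case whose feasibility is governed precisely by the sign of $f'(V_3 + V_1/2) - f'(V_2 + V_1/2) - f'(V_1/2)$: this is the first-variation inequality at the would-be degenerate point when $R_3$ is all on the left. I would compute the scalar residual $g(t)$ (the signed failure of the $w_0$-equilibrium) at the two endpoints $t$ near $0$ and $t = V_3$, show using Lemma \ref{lem:summorethanzero} and the strict convexity that $g$ is strictly monotone, and then read off that $g$ has a zero in the open interval iff the stated inequality holds, and otherwise the configuration degenerates (the minimizer of that nominal type is not a genuine equilibrium, i.e.\ $2313$ collapses to $213$). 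Getting the endpoint evaluation of $g$ to match the displayed inequality exactly — keeping track of the halved volume $V_1/2$ coming from symmetry of the degenerate inner bubble $\mid R_3^\ell \mid R_1 \mid$ around the origin — is the one computation I would do carefully; everything else is the same convexity-monotonicity template used throughout Section \ref{sec:types2}.
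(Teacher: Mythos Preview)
Your overall strategy---parametrize by the split of the two-component bubble, reduce to a single monotone scalar equation, and read off endpoint signs---can be made to work, but it is more laborious than the paper's argument, and your execution for type $2313$ contains concrete errors.

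The paper exploits a structural fact you do not use: for a standard nested bubble the equilibrium conditions \emph{decouple} across layers. For $2313 = \mid R_2 \mid R_3^\ell \mid R_1 \mid R_3^r \mid$, the inner layer is the standard $1$-bubble $R_1$ and the outer layer is the standard $2$-bubble enclosing volumes $V_2$ and $V_1+V_3$; the pressure relations yield $f'(w_2)+f'(w_3)=0$ for the inner layer and $f'(w_0)+f'(w_1)+f'(w_4)=0$ for the outer, with no cross terms. Hence by Proposition~\ref{prop:computestandard} both layers are determined independently and uniquely---there is no free split parameter at all. The only remaining question is geometric: does the inner bubble $[-V_1/2,\,V_1/2]$ sit inside the right-hand component of the outer bubble? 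The displayed inequality is precisely the condition $w_1 < -V_1/2$, obtained by evaluating the strictly increasing left-hand side of~\eqref{eq:equigeneral} for the outer bubble at $\Vtil = -V_2 - V_1/2$ and using oddness of $f'$. Types $3123$ and $32123$ are handled the same way, and there the nesting is automatic.

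Your specific errors for $2313$: (i) the outer equilibrium is $f'(w_0)+f'(w_1)+f'(w_4)=0$, not $f'(w_0)+f'(w_1)+f'(w_3)+f'(w_4)=0$; the point $w_3$ belongs to the inner equation. (ii) You locate the binding endpoint at $t\to V_3$ (``$R_3$ all on the left''), but that endpoint is always feasible; the constraint that actually produces the displayed inequality is $t\to 0$, where $R_3^\ell$ vanishes. (iii) The $V_1/2$ terms come from the inner single bubble $R_1$ being centered at the origin by its own equilibrium $f'(w_2)+f'(w_3)=0$, not from any ``degenerate inner bubble $\mid R_3^\ell \mid R_1\mid$''. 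With (i)--(iii) corrected your scheme goes through, but the decoupling observation makes all four cases essentially one line each.
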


\begin{proof}
Notice that each equilibrium is a standard nested bubble. By Proposition \ref{prop:computestandard}, its constitutent standard bubbles exist and are unique.
So an equilibrium exists and is unique if the standard bubbles are properly nested, and does not exist otherwise.

Equilibria of types $213$ and $32123$ are easily seen to be properly nested.
To check that an equilibrium of type $3123$ is properly nested, apply \cite[Lemma 4.8]{Bo} to the inner standard bubble.
Finally, to derive conditions for an equilibrium of type $2313$ to be properly nested,
use the equilibrium condition \eqref{eq:equigeneral} for the outer standard bubble
and the fact that its left-hand side is strictly increasing in $\Vtil$.
\end{proof}

Given a density and prescribed volumes, we can compute perimeters of equilibria of the four types
by first checking for existence using Proposition \ref{prop:fourtypesexist} and then solving for the constituent standard bubbles using Proposition \ref{prop:computestandard}.
We can then generate 3D plots of types of perimeter minimizers as prescribed volumes vary, as shown in Figures \ref{fig:firstdensity} and \ref{fig:seconddensity}.
A 3D plot is represented as an animation of 2D plots, with $V_2$ and $V_3$ being spatial axes and $V_1$ varying in time.

We plot only the region where $V_1 \leq V_2 \leq V_3$, and the complement of this region is left as an empty space.
To prevent rounding errors, we plot a point only if the perimeter-minimizing type has perimeter at least $10^{-4}$ less than those of the other types, and we leave the point an empty space otherwise.
In the top left plot of Figure \ref{fig:seconddensity}, the white stripe between the \textcolor{teal}{green} and \textcolor{violet}{purple} regions is a result of this.

Solving for equilibria using volume coordinates is more efficient than doing so using positional coordinates as in our previous paper \cite{Bo}.
Indeed, a plot in this paper has more than $10^7$ points and the plot in our previous paper \cite[Fig. 9]{Bo} has $10^4$ points, but both take on the order of 30 minutes to generate.
We also use a faster language, Julia, while our previous paper used Mathematica.
This accounts for some of the speedup.

\subsection{Results}

We generate plots of two densities, to illustrate two phenomena that can occur. The first density is
\begin{equation}
\label{eq:density1}
f_1(V)=\abs{V}\sqrt{\log(\abs{V}+1)}+1.
\end{equation}
We can check that $f_1$ is strictly log-convex and $C^1$. The idea behind $f_1$ is that we are trying
to simulate the Borell density $f(x)=e^{x^2}$ in the asymptotics.
By the Fundamental Bounding Lemma 6.3 in \cite{Bo}, the Borell density grows asymptotically
as $\abs{V}\sqrt{\log{\abs{V}}}$ in volume coordinates.
Then we put in the ``plus 1'' to make the density well-defined and positive.
For $f_1$, only 3 types of minimizers, except 2313, can be visually identified.
(This is not a proof that the other type does not occur).
Figure \ref{fig:firstdensity} provides snapshots of the plot and their descriptions.

\begin{figure}[h]
\includegraphics[width=225pt]{{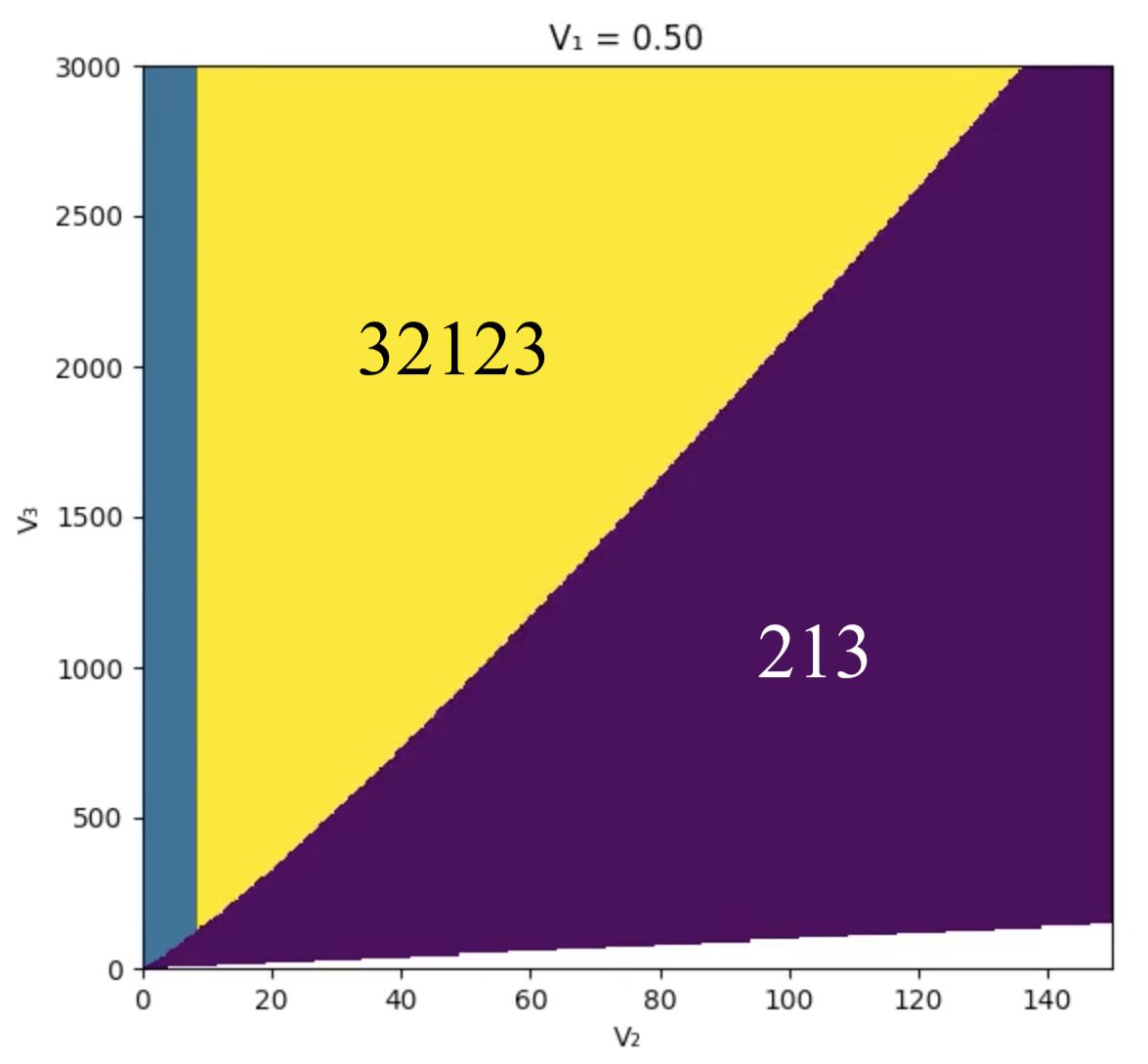}}
\includegraphics[width=225pt]{{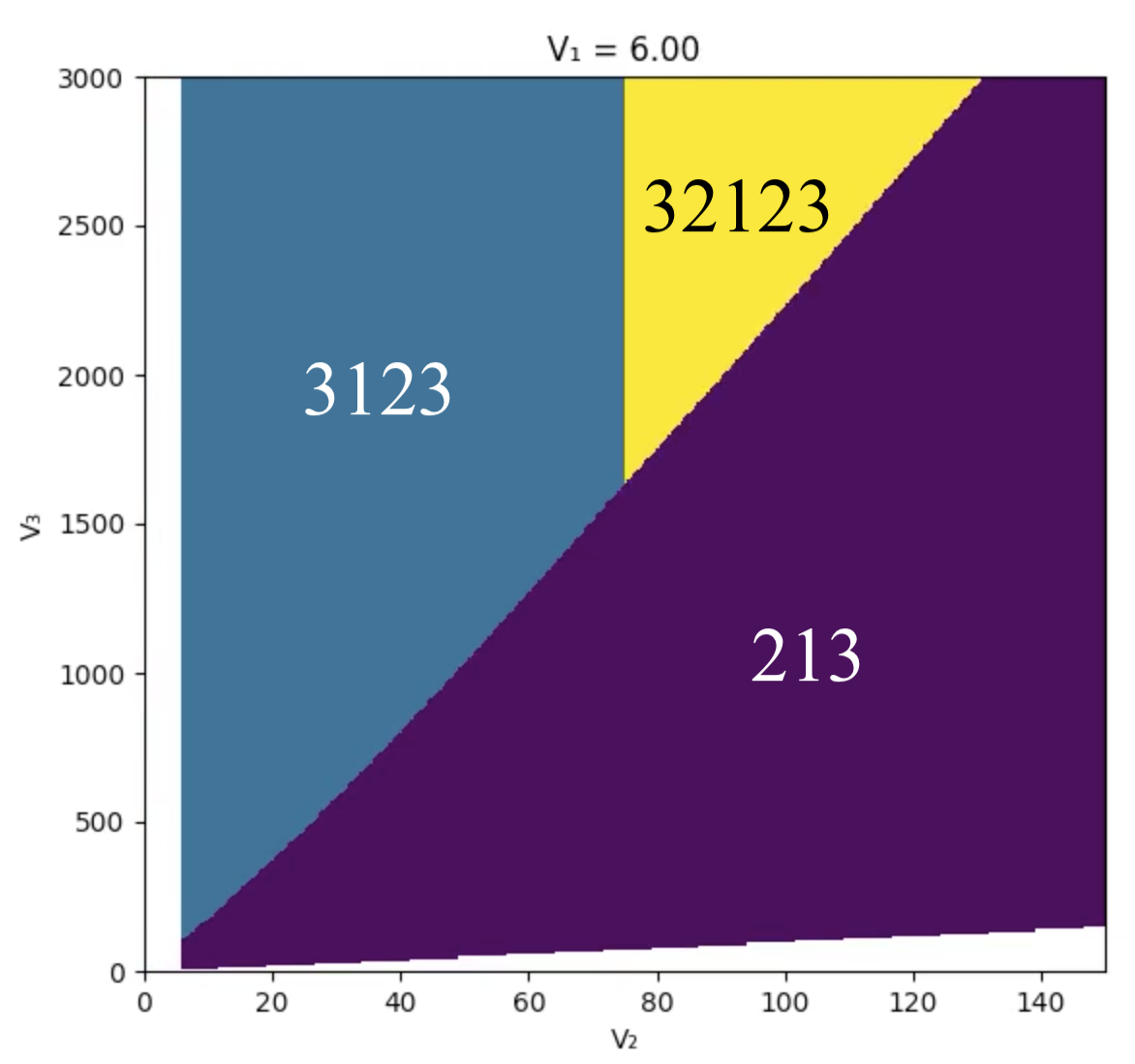}}
\caption{Snapshots from the animation (\url{https://github.com/natso26/triple-bubbles/raw/master/triple.mp4})
of 3D triple bubble type as a function of the three prescribed volumes $V_1$, $V_2$, $V_3$ for the density $f_1$ \eqref{eq:density1}.
Note that the $V_2$ and $V_3$ axes are on different scales.
Colors correspond to types as: \textcolor{violet}{purple} = 213; \textcolor{blue}{blue} = 3123; \textcolor{orange}{yellow} = 32123. The white areas near the axes are regions where $V_1 \leq V_2 \leq V_3$ is not satisfied and so nothing is drawn. As $V_1$ increases, the blue region pushes the yellow region to the right, while the purple region slowly rises.}
\label{fig:firstdensity}
\end{figure}

From the plot of the density $f_1$, one might wonder whether the type 2313 can ever be perimeter minimizing.
It turns out that this can happen if the density at the origin decreases. Consider the second density
\begin{equation}
\label{eq:density2}
f_2(V)=\abs{V}\sqrt{\log(\abs{V}+1)}+0.01,
\end{equation}
which is a translate of $f_1$.
For $f_2$, all four types in Proposition \ref{prop:typelogconvex} can be perimeter minimizing.
Nevertheless, the type 2313 can only be seen for small $V_1$, so the $V_1$ (time) axis is plotted on a logarithmic scale.
Figure \ref{fig:seconddensity} shows snapshots of the plot and their descriptions.
Based on the animation, I also attempt to sketch the 3D plot in Figure \ref{fig:sketch}.

Table \ref{tab:fourtypes} shows that all four types can be perimeter minimizing for the density $f_2$.

\begin{table}[h]
\begin{tabular}{|c|c|c|c|c|}
\hline 
$(V_1,V_2,V_3)$ & Type \textcolor{violet}{213} & Type \textcolor{blue}{3123} & Type \textcolor{teal}{2313} & Type \textcolor{orange}{32123} \\ 
\hline 
$(5, 100, 500)$ & \cellcolor{yellow} \textbf{1479.6294773} & 1667.8737745 & Not exist & 1661.4875997 \\ 
\hline 
$(5, 40, 2000)$ & 5608.7794571 & \cellcolor{yellow}  \textbf{5467.6249803} & Not exist & 5469.4347271 \\ 
\hline 
$(0.01, 100, 1500)$ & 4271.5195673 & 4351.3210336 & \cellcolor{yellow}  \textbf{4271.5168203} & 4335.5242035 \\ 
\hline 
$(2, 80, 2500)$ & 7167.5032872 & 7080.5694767 & Not exist & \cellcolor{yellow} \textbf{7071.1211666} \\ 
\hline 
\end{tabular}
\\
\caption{Prescribed volumes $(V_1,V_2,V_3)$ for which each of the four types are perimeter minimizing
for the density $f_2$ \eqref{eq:density2}. The numbers in the tables are perimeters of equilibria of each type.}
\label{tab:fourtypes}
\end{table}

In Figures \ref{fig:firstdensity} and \ref{fig:seconddensity}, the boundaries between the \textcolor{blue}{blue} and \textcolor{orange}{yellow} regions
are straight lines. In the $(V_1,V_2)$ coordinates, they are tie curves between the double bubbles $12$ and $212$
as studied in \cite{Bo}.

Code and animation for these two densities can be found at my GitHub repository \url{https://github.com/natso26/triple-bubbles}. The code is written in Julia. To run the code, follow the instructions in the repository to run it in \url{http://www.juliabox.com}.

\DeclareRobustCommand{\captionpar}{\par}

\begin{figure}[p]
\includegraphics[width=225pt]{{2-0.01.png}}
\includegraphics[width=225pt]{{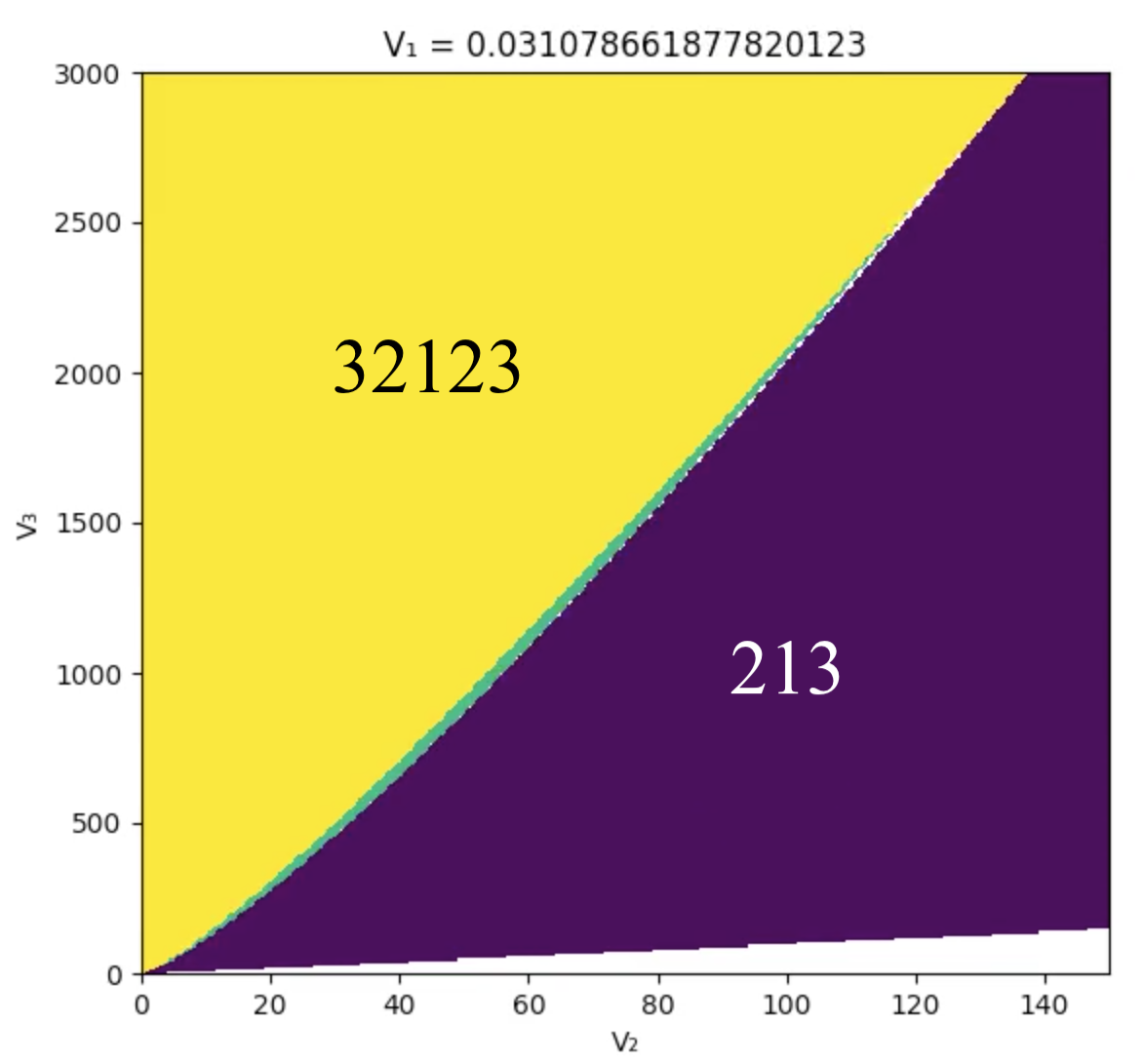}}
\includegraphics[width=225pt]{{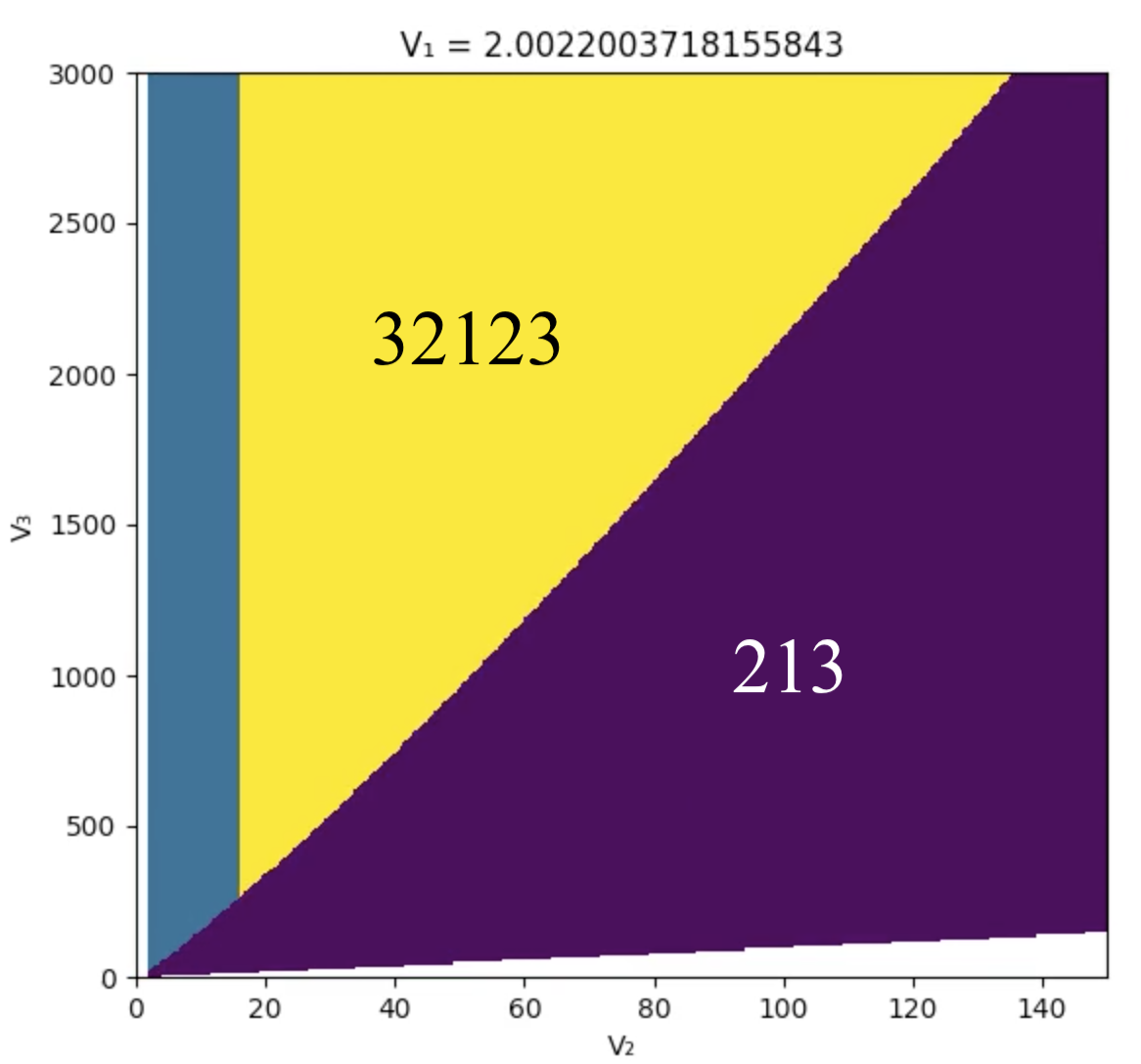}}
\includegraphics[width=225pt]{{2-6.png}}
\caption{Snapshots from the animation (\url{https://github.com/natso26/triple-bubbles/raw/master/triple2.mp4})
of 3D triple bubble type as a function of the three prescribed volumes $V_1$, $V_2$, $V_3$ for the density $f_2$ \eqref{eq:density2}.
The $V_2$ and $V_3$ axes are on different scales.
Colors correspond to types as: \textcolor{violet}{purple} = 213; \textcolor{blue}{blue} = 3123; \textcolor{teal}{green} = 2313; \textcolor{orange}{yellow} = 32123. The white areas near the axes are regions where $V_1 \leq V_2 \leq V_3$ is not satisfied and so nothing is drawn. The white stripes near the transition boundaries are regions where the difference between a perimeter-minimizing type and another type does not exceed $10^{-4}$ and so nothing is drawn to guard against rounding errors. \captionpar\setlength{\parindent}{1em}
In this animation, $V_1$ increases exponentially. As $V_1$ increases, the purple region first squeezes the green region out of existence; then the blue region emerges and pushes the yellow region to the right while the purple region slowly rises.
The green region can only be seen when $V_1$ is small (less than 0.05).}
\label{fig:seconddensity}
\end{figure}

\begin{figure}[p]
\includegraphics[width=400pt]{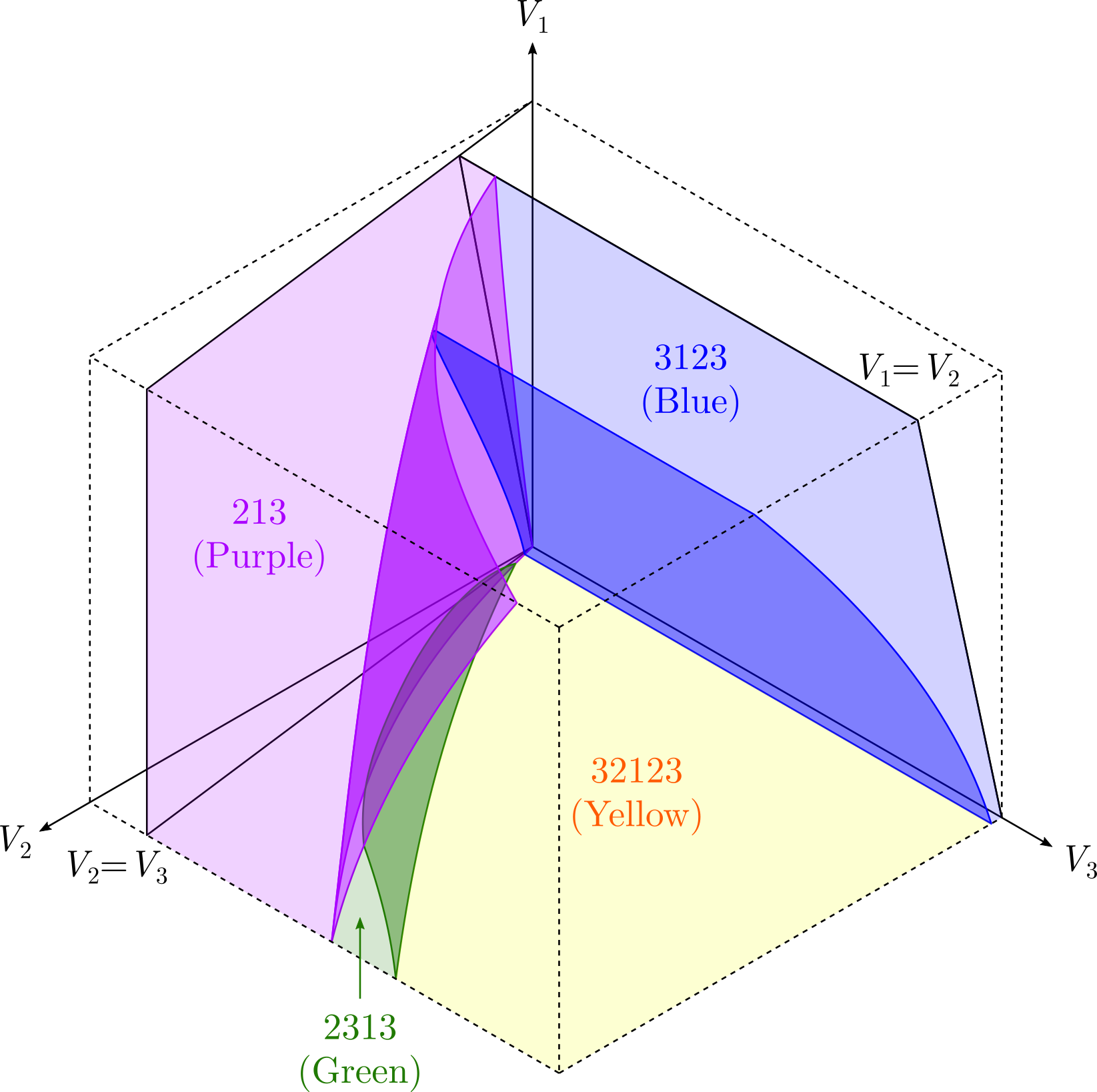}
\caption{A rough sketch of 3D triple bubble type as a function of the three prescribed volumes $V_1$, $V_2$, $V_3$ for the density $f_2$ \eqref{eq:density2}.
The axes are not on the same scale, and the scale for the $V_1$ axis is nonlinear
just as in the animation. I do not claim any accuracy of this sketch.
The transition boundaries seem to be made up of three surfaces stitched together.
The green (2313) region is possibly bounded; if so, this is the first instance of a bounded region
for 1D $n$-bubbles.
Section \ref{sec:conj} provides more conjectures.}
\label{fig:sketch}
\end{figure}

Therefore, based on Proposition \ref{prop:typelogconvex} and the work in this section, we conclude that Theorem \ref{thm:main} holds.

\section{Further Work}
\label{sec:conj}

In this work, we completely determined the possible types of perimeter-minimizing triple bubbles for symmetric, strictly log-convex densities.
Nevertheless, there remains many unanswered questions about transitions between different types.
In particular, many conjectures can be formulated based on the sketch in Figure \ref{fig:sketch}.
To list some questions and conjectures:

\begin{enumerate}
\item \textbf{Possible types.} For some densities, there are only three types of minimizers (all except 2313), as conjectured to be the case for the density $f_1$ \eqref{eq:density1}; for other densities, all four types of minimizers occur, as is the case for the density $f_2$ \eqref{eq:density2}.
Can there be other sets of possible perimeter-minimizing types?
\item \textbf{Nature of tie surfaces.} The tie surfaces, that is, the set where at least two types of minimizers have equal perimeter, are really surfaces: multiple 2D manifolds stitched together. Moreover, for $C^k$ densities, they are $C^k$ manifolds (except where they meet). Furthermore, are there only three surfaces stitched together as in Figure \ref{fig:sketch}, or is this an illusion created by some surfaces meeting at nearly 180 degrees?
\item \textbf{Boundedness of regions.} The region for type 2313, if it exists, is bounded.
The regions for other types are never bounded.
\item \textbf{When some volumes are equal.}
If $V_2=V_3$, then the minimizer is of type 213. If $V_1=V_2$, then as $V_3$ increases, the minimizer transitions from type 213 to type 3123.
\item \textbf{When volumes are large.}
For each $V_1$, for $V_2$ large (as a function of $V_1$), for $V_3$ large (as a function of $V_1$ and $V_2$),
the minimizer is of type 32123.
\item \textbf{Transitions when volumes increase.}
There is a constant $\lambda_0$, a function $\lambda_1(V_1)$, and functions $\lambda_{2313}(V_1)$ and $\lambda_{2313}'(V_1)$ for $V_1 \leq \lambda_0$
with the following properties:
\begin{itemize}
\item If $V_2 < \lambda_1(V_1)$, then as $V_3$ increases, the minimizer transitions from type 213 to type 3123.
\item If $V_2 > \lambda_1(V_1)$, $V_1<\lambda_0$, and $\lambda_{2313}(V_1) < V_2 <  \lambda_{2313}'(V_1)$,
then as  $V_3$ increases, the minimizer transitions from type 213 to type 2313 and then to type 3123.
\item If $V_2 > \lambda_1(V_1)$ and either $V_1>\lambda_0$, $V_2 < \lambda_{2313}(V_1)$, or $V_2 > \lambda_{2313}'(V_1)$, then the minimizer
transitions only from type 213 to type 3123.
\end{itemize}
See Figure \ref{fig:sketch}.
\end{enumerate}

\section*{Appendix}

The following are results from \cite{Bo} that are used in this work.

{\small
\begin{namedtheorem}[Corollary 3.3]
Let $f$ be a $C^1$ density on $\R$.
If an $n$-bubble with boundary points $x_1<x_2<\dots<x_k$ is perimeter minimizing, then
$$\sum_{i=1}^k (\log f)'(x_i)=0.$$
More generally, if $1 \leq a < b \leq k$ are such that
the blocks to the left of $x_a$ and to the right of $x_b$ both belong to the same bubble or to no bubble, then
$$\sum_{i=a}^b (\log f)'(x_i)=0.$$
\end{namedtheorem}

\begin{namedtheorem}[Remark 3.4]
In Corollary 3.3, if the condition on $f$ is relaxed from $C^1$ to one-sided derivatives (for example if $f$ is convex or log-convex), then similarly the sum of the right derivatives is nonnegative and the sum of the left derivatives nonpositive.  
\end{namedtheorem}

\begin{namedtheorem}[Proposition 3.5]
On $\R$ with a continuous density
that is nonincreasing on $(-\infty,0]$ and 
nondecreasing on $[0,\infty)$,
a perimeter-minimizing $n$-bubble
consists of finitely many \emph{contiguous} intervals. 
\end{namedtheorem}

\begin{namedtheorem}[Lemma 3.7]
Consider $\R$ with a continuous density that is nonincreasing on $(-\infty,0]$ and nondecreasing on $[0,\infty)$. Let $M$ be the density minimum set where $f(x)=f(0)$. Consider two components of the same bubble in a perimeter-minimizing $n$-bubble.  Then the component on the right contains no points to the left of $M$ and some to the right of $M$. Similarly the component on the left contains no points right of $M$ and some left of $M$.
\end{namedtheorem}

\begin{namedtheorem}[Proposition 3.8]
On $\R$ with a continuous density that is nonincreasing on $(-\infty,0]$ and nondecreasing on $[0,\infty)$, a perimeter-minimizing $n$-bubble has at most $2n-1$ components.
\end{namedtheorem}

\begin{namedtheorem}[Definition 4.1]
A \emph{double interval $(x_1,x_2,x_3)$} for prescribed volumes
 $V_1\leq V_2$ consists of two contiguous intervals $[x_1, x_2]$, $[x_2, x_3]$ of volumes $V_1$ and $V_2$, respectively, as in Figure \ref{fig:doubletriple} (figure in this article). For a $C^1$ density $f$,
a double bubble is \emph{in equilibrium}
if it satisfies the consequence of perimeter minimization of Corollary 3.3:
$$(\log f)'(x_1)+(\log f)'(x_2)+(\log f)'(x_3)=0.$$ The term also applies to the generalization to one-sided derivatives of Remark 3.4.

The \emph{triple interval $(y_1,y_2)$} for prescribed volumes
$V_1 \leq V_2$ consists of three contiguous intervals, two of which flank the middle interval and enclose an equal volume, as in Figure \ref{fig:doubletriple} (figure in this article). The middle interval is $[-y_1, y_1]$ and encloses volume $V_1$. The left interval is $[-y_2, -y_1]$ and the right interval is $[y_1, y_2]$, and each encloses volume $V_2/2$.

For a symmetric continuous, piecewise $C^1$ density, the triple interval is in equilibrium.
\end{namedtheorem}

\begin{namedtheorem}[Lemma 4.2 \normalfont{(Volume coordinate)}]
On $\R$ with density f, let
$$V = \int_0^x f.$$
Then $f$ is a log-convex function of $x$ if and only if $f$ is a convex function of $V$.
\end{namedtheorem}

\begin{namedtheorem}[Proposition 4.6]
On $\R$ with symmetric, strictly log-convex density $f$,
for prescribed volumes $V_1\le V_2$,
a perimeter-minimizing double bubble is one of the following:
\begin{enumerate}[label = (\alph*)]
\item the unique double interval $(x_1,x_2,x_3)$ in equilibrium (up to reflection) or
\item the triple interval $(y_1,y_2)$.
\end{enumerate}
\end{namedtheorem}

\begin{namedtheorem}[Figure 9] \ \\
\begin{center}
\includegraphics[width=0.5\textwidth]{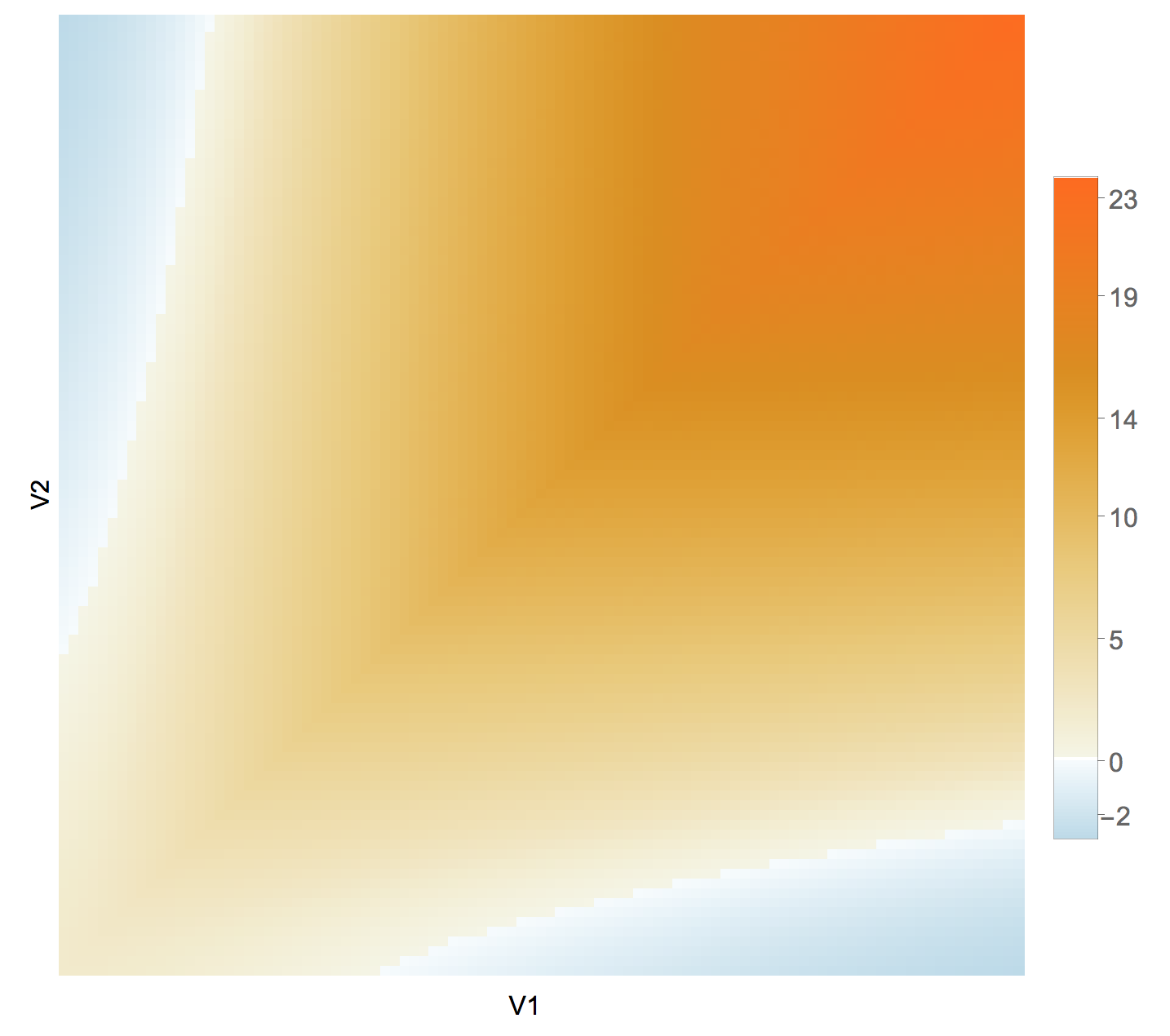}
\end{center}
\noindent
The figure above is a numerical computation which represents the value of the perimeter difference $\mu(V_1, V_2)$ for Borell density $f(x)=e^{x^2}$. The orange color marks the region in which the double interval has lesser perimeter, the blue color represents the region in which the triple interval has lesser perimeter, and the white curve marks the tie point between the double and triple intervals. Computed in Mathematica.
\end{namedtheorem}

\begin{namedtheorem}[Lemma 4.8]
On $\R$ with symmetric, strictly log-convex, $C^1$ density,
for prescribed volumes $V_1< V_2$,
$$-\frac{V_1+V_2}{2}<\widetilde{V}<-V_1.$$
\end{namedtheorem}

\begin{namedtheorem}[Theorem 4.15]
On $\R$ with symmetric, strictly log-convex, $C^1$ density $f$ such that $(\log f)'$ is unbounded,
given $V_1>0$, there is a unique $V_2=\lambda(V_1)$
such that the double interval in equilibrium and the triple interval
tie.
For $V_2>\lambda(V_1)$, the perimeter-minimizing
double bubble is uniquely the triple interval.
For $V_2<\lambda(V_1)$, the perimeter-minimizing
double bubble is uniquely the double interval
in equilibrium.
Moreover, $\lambda$ is a strictly increasing $C^1$ function
that tends to a positive limit as $V_1\to 0$.
\end{namedtheorem}

\begin{namedtheorem}[Conjecture 7.1]
In $\mathbb{R}^N$ with a smooth, radial, log-convex density, a perimeter-minimizing double bubble is either
\begin{enumerate}[label=(\roman*)]
\item the bubble inside a bubble (e.g. for $V_1$ small and $V_2$ large), or
\item the standard double bubble (e.g. for $V_2$ close to $V_1$).
\end{enumerate}
\end{namedtheorem}
}

\end{document}